\DeclareMathOperator{\ad}{ad}
\DeclareMathOperator{\diag}{diag}
\DeclareMathOperator{\Ad}{Ad}
\DeclareMathOperator{\Lin}{Lin}
\DeclareMathOperator{\Isom}{Isom}
\DeclareMathOperator{\trace}{trace}
\DeclareMathOperator{\Ker}{Ker}
\DeclareMathOperator{\Lie}{Lie}
\renewenvironment{proof}[1][Proof]{\textbf{#1.} }
{\ \rule{0.5em}{0.5em}}
\newtheorem{theorem}{Theorem}
\newtheorem{prop}{Proposition}
\newtheorem{lemma}{Lemma}
\newtheorem{corollary}{Corollary}
\newtheorem{quest}{Question}
\theoremstyle{definition}
\newtheorem{remark}{Remark}
\begin{document}

\title
[On left-invariant Einstein Riemannian metrics \dots] {On left-invariant Einstein Riemannian metrics \\ that are not geodesic orbit}

\author{Yu.G.~Nikonorov}

\begin{abstract}
In this paper we prove
that the compact Lie group $G_2$ admits a left-invariant Einstein metric that is not geodesic orbit.
In order to prove the required assertion, we develop some special tools for geodesic orbit Riemannian manifolds.
It should be noted that a suitable metric is discovered in a recent paper by I.~Chrysikos and Y.~Sakane,
where the authors proved also that this metric is not naturally reductive.

\vspace{2mm} \noindent 2010 Mathematical Subject Classification:
53C20, 53C25, 53C35.

\vspace{2mm} \noindent Key words and phrases: homogeneous spaces, homogeneous Riemannian manifolds, geodesic orbit
spaces, Einstein manifolds.
\end{abstract}

\maketitle

\section{Introduction}

All manifolds in this paper are supposed to be connected.
Our study related to the following important classes of Riemanian manifolds: geodesic orbit manifolds and homogeneous Einstein manifolds.
Recall that a Riemannian manifold $(M=G/H,g)$, where $H$ is a compact subgroup
of a Lie group $G$ and $g$ is a $G$-invariant Riemannian metric,
is called a {\it space with homogeneous geodesics} or {\it geodesic orbit space} if any geodesic $\gamma $ of $M$ is an orbit of
a 1-parameter subgroup of the group $G$. Moreover,
a Riemannian manifold $(M,g)$ is called a  {\it manifold with
homogeneous geodesics} or {\it geodesic orbit manifold} if any
geodesic $\gamma $ of $M$ is an orbit of a 1-parameter subgroup of
the full isometry group of $(M,g)$.
Hence, a Riemannian manifold $(M,g)$ is  a geodesic orbit Riemannian manifold,
if it is a geodesic orbit space with respect to its full connected isometry group.
This terminology was introduced in
\cite{KV} by O.~Kowalski and L.~Vanhecke, who initiated a systematic study of such spaces.

Let $(M=G/H, g)$ be a homogeneous Riemannian manifold. Since $H$
is compact, there is an $\Ad(H)$-invariant decomposition
\begin{equation}\label{reductivedecomposition}
\mathfrak{g}=\mathfrak{h}\oplus \mathfrak{m},
\end{equation}
where $\mathfrak{g}={\rm Lie }(G)$ and $\mathfrak{h}={\rm Lie}(H)$.
The Riemannian metric $g$ is $G$-invariant and is determined
by an $\Ad(H)$-invariant Euclidean metric $g = (\cdot,\cdot)$ on
the space $\mathfrak{m}$ which is identified with the tangent
space $T_oM$ at the initial point $o = eH$.

By $[\cdot, \cdot]$ we denote the Lie bracket in $\mathfrak{g}$, and by
$[\cdot, \cdot]_{\mathfrak{m}}$ its $\mathfrak{m}$-component according to (\ref{reductivedecomposition})
We recall (in the above terms) a well-known criteria.

\begin{lemma}[\cite{KV}]\label{GO-criterion}
A homogeneous Riemannian manifold   $(M=G/H,g)$ with the reductive
decomposition  {\rm (\ref{reductivedecomposition})} is a geodesic orbit space if and
only if  for any $X \in \mathfrak{m}$ there is $Z \in \mathfrak{h}$ such that
$([X+Z,Y]_{\mathfrak{m}},X) =0$ for all $Y\in \mathfrak{m}$.
\end{lemma}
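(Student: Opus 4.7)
The plan is to reduce the statement to the standard one-variable Geodesic Lemma: a curve of the form $\gamma(t)=\exp(tW)\cdot o$ (with $W\in\mathfrak{g}$) is a geodesic of the invariant metric $g$ if and only if $([W,Y]_{\mathfrak{m}},W_{\mathfrak{m}})=0$ for every $Y\in\mathfrak{m}$, where $W_{\mathfrak{m}}$ denotes the $\mathfrak{m}$-component of $W$ in the reductive splitting. Once this auxiliary fact is in place, writing $W=Z+X$ with $Z\in\mathfrak{h}$ and $X\in\mathfrak{m}$ gives $W_{\mathfrak{m}}=X$, and the lemma's condition $([X+Z,Y]_{\mathfrak{m}},X)=0$ is precisely the criterion for $\exp(tW)\cdot o$ to be a geodesic with initial velocity $X\in T_oM$.

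To establish the auxiliary Geodesic Lemma I would work at the point $o=eH$, where every tangent vector has a canonical representative in $\mathfrak{m}$. I would compute $(\nabla_{\widetilde W}\widetilde W)(o)$ using the Koszul formula for the Levi-Civita connection applied to the Killing vector field $\widetilde W$ generated by $W$, and use that $\widetilde W(o)=W_{\mathfrak{m}}$ while $\mathrm{ad}$-invariance of the metric under $H$ handles the $\mathfrak{h}$-piece of $W$. Expanding the Koszul identity and using the skew-symmetry of $\nabla\widetilde W$ (which comes from $\widetilde W$ being Killing), one finds that
\begin{equation*}
g\bigl((\nabla_{\widetilde W}\widetilde W)(o),\widetilde Y(o)\bigr)=([W,Y]_{\mathfrak{m}},W_{\mathfrak{m}}),
\end{equation*}
so vanishing of the covariant derivative at $o$ is equivalent to the stated bilinear condition. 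By $G$-invariance, the geodesic equation then holds along the whole integral curve, not only at $o$.

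With the Geodesic Lemma available, both directions of the criterion are short. For the ``only if'' direction, given $X\in\mathfrak{m}$ take the geodesic $\gamma$ through $o$ with $\dot\gamma(0)=X$; the geodesic orbit property produces some $W\in\mathfrak{g}$ with $\gamma(t)=\exp(tW)\cdot o$, and decomposing $W=X+Z$ (using that $\dot\gamma(0)=W_{\mathfrak{m}}=X$ forces the $\mathfrak{m}$-part of $W$ to be $X$) yields the required $Z$ via the Geodesic Lemma. For the ``if'' direction, the assumption provides such a $Z$ for each $X$; the Geodesic Lemma then shows $\exp(t(X+Z))\cdot o$ is a geodesic with initial velocity $X$, and uniqueness of geodesics identifies it with the prescribed $\gamma$, exhibiting $\gamma$ as an orbit of a one-parameter subgroup of $G$.

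The only genuinely technical step is the Geodesic Lemma itself, and within that the main obstacle is handling the $\mathfrak{h}$-component of $W$ cleanly: one has to check that the contribution of $Z\in\mathfrak{h}$ to $\nabla_{\widetilde W}\widetilde W$ at $o$ is captured exactly by the bracket $[Z,Y]_{\mathfrak{m}}$ paired against $X$, which in turn relies on the $\mathrm{Ad}(H)$-invariance of the inner product on $\mathfrak{m}$. Everything else is a direct unpacking of definitions.
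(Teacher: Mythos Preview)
The paper does not prove this lemma at all: it is stated with a citation to Kowalski--Vanhecke \cite{KV} and used as a black box throughout. So there is no ``paper's own proof'' to compare against; the relevant question is simply whether your argument is sound.

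Your approach is the standard one and is essentially correct. The reduction to the one-variable Geodesic Lemma is exactly how the result is usually established, and your sketch of that lemma via the Koszul formula and the Killing property of the fundamental vector field $\widetilde W$ is the right mechanism. The step you flag as the only technical one---that the $\mathfrak{h}$-part of $W$ contributes precisely through $[Z,Y]_{\mathfrak m}$ because of $\Ad(H)$-invariance of $(\cdot,\cdot)$---is indeed the heart of the computation, and it goes through as you indicate. One small point worth tightening in the ``only if'' direction: the geodesic orbit hypothesis says the \emph{image} of $\gamma$ is an orbit, not a priori that $\gamma(t)=\exp(tW)\cdot o$ with the same parameter. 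In the Riemannian setting this is harmless, since both $\gamma$ and $t\mapsto\exp(tW)\cdot o$ have constant speed and share the tangent line at $o$, so after rescaling $W$ the two parametrizations agree; but you should say this explicitly rather than assume it.
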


In what follows, the latter condition in this lemma will be called the {\it GO-property}. Recall that for a given $X\in\mathfrak{m}$, this means that the orbit
of  $\exp\bigl((X+Z)t\bigr) \subset G$, $t \in \mathbb{R}$, through the point $o=eH$ is a geodesic in $(M=G/H,g)$.

Various useful information on geodesic orbit manifolds could be found in the papers \cite{KV,Gor96,NRS,Nik2016} and in the references therein.
There are some important subclasses of geodesic orbit manifolds, for instance, symmetric spaces \cite{Hel}, isotropy irreducible spaces~\cite{Bes},
weakly symmetric spaces \cite{W1}.

Any homogeneous space
$M = G/H$ of a compact Lie group $G$  admits a Riemannian metric $g$ such that $(M,g)$ is a geodesic orbit space.
Indeed, it suffices to take the metric
$g$ induced by a bi-invariant Riemannian metric $g_0$ on the Lie group  $G$ such that
$ (G,g_0) \to (M=G/H, g)$ is a Riemannian submersion
with totally geodesic fibres. Such geodesic orbit space $(M = G/H, g)$ is called  a {\it normal homogeneous space}.
If in addition $g_0$ is generated with the minus Killing form of the Lie algebra $\Lie(G)$, then the metric $g$ is called {\it standard} or {\it Killing}.
It should be noted also that  any  naturally  reductive  Riemannian manifold is geodesic orbit.
Recall that a Riemannian manifold $(M,g)$ is
{\it naturally reductive} if it admits a transitive Lie group $G$ of isometries with a bi-invariant
pseudo-Riemannian metric $g_0$, which induces the metric $g$ on $M = G/H$ (see  \cite{Bes} and \cite{KN}).
Clear that symmetric spaces and normal homogeneous spaces are naturally reductive.
The classification of  naturally reductive homogeneous spaces of $\dim \leq 5$ was
obtained by O. Kowalski and L.~Vanhecke in~\cite{KV1}.
In \cite{KV}, O.~Kowalski and L.~Vanhecke classified all geodesic orbit spaces
of dimension $\leq 6$. In particular, they proved that every geodesic orbit Riemannian manifold of dimension $\leq 5$ is naturally reductive.

Finally, we notice  that
{\it generalized normal homogeneous Riemannian manifolds}
({\it $\delta$-homogeneous manifold}, in another terminology)  and
{\it Clifford--Wolf homogeneous Riemannian manifolds} constitute other important
subclasses of geodesic orbit manifolds \cite{BerNik, BerNikClif}.
For more details on geodesic orbit Riemannian manifolds see the papers \cite{KV,Gor96,Nik2016,NRS} and the references therein.
\medskip

A Riemannian metric is {\it Einstein} if the Ricci curvature is a constant multiple of the metric.
Various results on Einstein manifolds could be found in the book \cite{Bes} of A.L.~Besse and in more recent surveys \cite{NRS, Wang1, Wang2}.
Well known examples of Einstein manifolds are irreducible symmetric spaces and isotropy irreducible spaces. It should be noted that
there are many homogeneous examples of Einstein metrics, in particular, normal homogeneous and naturally reductive.
\smallskip

Now, we are going to study the following problem: {\it To find an example of a compact
Lie group $G$ supplied with a left-invariant Riemannian $\rho$ such that
$(G,\rho)$ is Einstein but is not a geodesic orbit Riemannian manifold}. If we replace the GO-property with a more weak condition of the natural reductivity,
then we get a well known problem. Let us recall some details.
\smallskip

In \cite{DZ}, J.E.~D'Atri and W.~Ziller have investigated naturally
reductive metrics among the left invariant metrics on compact Lie
groups, and have given a complete classification in the case of
simple Lie groups. Let $G$ be a compact connected semi-simple Lie
group, $H$ a closed subgroup of $G$, and let ${\mathfrak g}$ be the
Lie algebra of $G$ and ${\mathfrak h}$ the subalgebra corresponding
to $H$. Denote by $\langle \cdot, \cdot \rangle$ the negative of the Killing form $B$ of
${\mathfrak g}$. Let ${\mathfrak m}$ be a orthogonal complement of
${\mathfrak h}$ with respect to $B$. Then we have
$$
{\mathfrak g}={\mathfrak h}\oplus {\mathfrak m},\quad \Ad(H){\mathfrak m}\subset{\mathfrak m}.
$$
Let ${\mathfrak h}={\mathfrak h}_0\oplus {\mathfrak h}_1\oplus\cdots\oplus{\mathfrak h}_p$ be the decomposition into
ideals of ${\mathfrak h}$, where ${\mathfrak h}_0$ is the center of
${\mathfrak h}$ and ${\mathfrak h}_i$, $i=1,\cdots,p$, are simple
ideals of ${\mathfrak h}$. Let $A_0|_{{\mathfrak h}_0}$ be an
arbitrary inner product on~${\mathfrak h}_0$.

\begin{prop}[\cite{DZ}]\label{natredgr}
Under the notations above, a left
invariant metric on $G$ of the form
\begin{equation}\label{natural}
(\cdot,\cdot)=x \langle \cdot, \cdot \rangle |_{\mathfrak m}+A_0|_{\mathfrak h_0}+u_1 \langle \cdot, \cdot \rangle|_{\mathfrak h_1}+
\cdots+u_p \langle \cdot, \cdot \rangle|_{\mathfrak h_p}
\qquad (\,x, u_1, \dots, u_p \in {\mathbb R}^+\,)
\end{equation}
is
naturally reductive with respect to $G\times H$, where $G\times H$
acts on $G$ by $(a,b) (c) = acb^{-1}$. Conversely, if a left invariant
metric $(\cdot,\cdot)$ on a compact simple Lie group $G$ is
naturally reductive, then there exists a closed subgroup $H$ of $G$
such that the metric $(\cdot,\cdot)$ is given by the form
{\rm(\ref{natural})}.
\end{prop}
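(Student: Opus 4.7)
The plan is to prove the two implications of the proposition separately, making heavy use of the special structure of compact simple Lie groups in the converse.

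\textbf{Forward direction.} Identify $G$ with the homogeneous space $(G\times H)/\Delta H$ under the action $(a,b)\cdot c=acb^{-1}$, so the stabilizer of $e\in G$ is the diagonal $\Delta H=\{(h,h):h\in H\}$ with Lie algebra $\Delta\mathfrak{h}=\{(Y,Y):Y\in\mathfrak{h}\}$. The projection $\mathfrak{g}\oplus\mathfrak{h}\to\mathfrak{g}$, $(X,Y)\mapsto X-Y$, identifies every $\Ad(\Delta H)$-invariant complement of $\Delta\mathfrak{h}$ with $T_eG=\mathfrak{g}$. As reductive complement I would take
\begin{equation*}
\mathfrak{p}=\{(X,0):X\in\mathfrak{m}\}\oplus\bigoplus_{i=0}^{p}\{(\alpha_iY,\beta_iY):Y\in\mathfrak{h}_i\},
\end{equation*}
with real scalars satisfying $\alpha_i-\beta_i=1$; $\Ad(\Delta H)$-invariance of $\mathfrak{p}$ is automatic because $\mathfrak{m}$ and each $\mathfrak{h}_i$ are $\Ad(H)$-invariant. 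On $\Lie(G\times H)=\mathfrak{g}\oplus\mathfrak{h}$ introduce the bi-invariant symmetric form
\begin{equation*}
Q\bigl((X_1,Y_1),(X_2,Y_2)\bigr)=x\langle X_1,X_2\rangle+q(Y_1,Y_2),
\end{equation*}
where $q=v_i\langle\cdot,\cdot\rangle$ on each simple ideal $\mathfrak{h}_i$ with $i\geq 1$, and $q|_{\mathfrak{h}_0}$ is an arbitrary non-degenerate symmetric bilinear form. Using $\mathfrak{m}\perp\mathfrak{h}$ and $\mathfrak{h}_i\perp\mathfrak{h}_j$ (with respect to both $\langle\cdot,\cdot\rangle$ and $q$), the transport of $Q|_{\mathfrak{p}}$ along the projection to $\mathfrak{g}$ reduces the matching problem to the scalar system $\alpha_i-\beta_i=1$ and $x\alpha_i^2+v_i\beta_i^2=u_i$ on each simple ideal; these admit real solutions with $v_i\neq 0$ for any $x,u_i>0$, while the freedom in $q|_{\mathfrak{h}_0}$ absorbs the arbitrary $A_0$. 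Non-degeneracy of $Q$ then yields a bi-invariant pseudo-Riemannian metric on $G\times H$ inducing (\ref{natural}), proving natural reductivity with respect to $G\times H$.

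\textbf{Converse.} Suppose $(G,(\cdot,\cdot))$ is naturally reductive, witnessed by a connected Lie group $L$ acting transitively and isometrically and a bi-invariant pseudo-Riemannian form $g_0$ on $L$ inducing $(\cdot,\cdot)$. The plan is: (i) invoke the Ochiai--Takahashi description of the identity component of the isometry group of a left-invariant metric on a compact simple Lie group, namely $\Isom^0(G,(\cdot,\cdot))=L(G)\cdot R(K)$ with $K=\{k\in G:R_k^*(\cdot,\cdot)=(\cdot,\cdot)\}$, and reduce to the case $L(G)\subseteq L\subseteq L(G)\cdot R(G)$; (ii) define $H$ as the projection of the isotropy $L_e$ onto the right-translation factor, realizing $L$ as a quotient of $G\times H$ by a discrete central subgroup; (iii) pull $g_0$ back to $\mathfrak{g}\oplus\mathfrak{h}$ and exploit bi-invariance together with the simplicity of $\mathfrak{g}$ to conclude that the $\mathfrak{g}$-component of $g_0$ is a negative scalar multiple of the Killing form $B$ and that the $\mathfrak{g}$-$\mathfrak{h}$ cross terms vanish (any $\Ad(G)$-equivariant linear map $\mathfrak{g}\to\mathfrak{h}$ is zero by simplicity); (iv) decompose the $\Ad(H)$-invariant bi-invariant part on $\mathfrak{h}$ as scalar multiples of Killing forms on the simple ideals $\mathfrak{h}_i$ plus an arbitrary bilinear form on the centre $\mathfrak{h}_0$; (v) verify that the induced metric on the reductive complement (determined by steps (ii)--(iv)) has precisely the shape (\ref{natural}).

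\textbf{Main obstacle.} The forward direction is a finite algebraic exercise. The substantive work lies in the converse: one must control the possible transitive groups of isometries of $(G,(\cdot,\cdot))$ and exploit the simplicity of $\mathfrak{g}$ to force $g_0$ into the strictly block-diagonal shape compatible with the decomposition $\mathfrak{h}=\mathfrak{h}_0\oplus\mathfrak{h}_1\oplus\cdots\oplus\mathfrak{h}_p$. The rigidity provided by simplicity of $G$ is essential and indicates why the classification is confined to simple groups; it fails for general compact Lie groups.
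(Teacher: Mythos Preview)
The paper does not prove this proposition at all: it is quoted as Proposition~\ref{natredgr} with the citation \cite{DZ} and no argument is given. There is therefore no ``paper's own proof'' to compare your proposal against.

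As a standalone sketch your forward direction is the standard construction and is fine. In the converse, one step deserves more care than you indicate: the group $L$ witnessing natural reductivity is merely \emph{some} transitive isometry group with a bi-invariant form inducing the metric, and it need not a priori contain $L(G)$. Ochiai--Takahashi (and Ozeki's theorem, also cited in this paper) controls the \emph{full} connected isometry group, not an arbitrary transitive subgroup, so the reduction in your step~(i) to $L(G)\subseteq L\subseteq L(G)R(G)$ requires an additional argument --- essentially that one may pass from the given $L$ to a larger group of the form $G\times H$ while retaining a bi-invariant form that still induces the metric. This is exactly where the work lies in D'Atri--Ziller's original proof, and your outline acknowledges but does not resolve it.
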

\smallskip

In the paper \cite{DZ}, J.E. D'Atri and W. Ziller  found a large number of left-invariant Einstein metrics, which are naturally reductive,
on the compact Lie groups $SU(n)$, $SO(n)$ and $Sp(n)$.
In the same paper, the authors posed the problem of existence
of left-invariant Einstein metrics, which are not naturally reductive,
on compact Lie groups.
\smallskip

The first examples were obtained by K. Mori in \cite{M}, where he constructed non naturally reductive Einstein metrics on the
Lie group $SU(n)$ for $n\ge 6$. Further, in \cite{AMS} A.~Arvanitoyeorgos, Y.~Sakane, and K. Mori
proved existence of non naturally reductive Einstein metrics on the compact Lie groups
$SO(n)$ $(n\ge 11)$, $Sp(n)$ $(n\ge 3)$, $E_6, E_7$ and $E_8$.
In \cite{CL} Z. Chen and K. Liang found three naturally reductive and one non naturally reductive Einstein metric on the compact Lie group $F_4$.
Also, in \cite{ASS2}, the authors  obtained new left-invariant Einstein metrics on the symplectic group $Sp(n)\ (n\ge 3)$, and in
\cite{CS} I.~Chrysikos and Y.~Sakane obtained new non naturally reductive Einstein metrics on exceptional Lie groups.
In the recent paper \cite{ASS3}, the authors obtained new left-invariant Einstein metrics on the compact
Lie groups $SO(n)$ $(n\ge 7)$ which are not naturally reductive.
\smallskip

Note that there are examples of homogeneous Einstein metrics (distinct from compact Lie groups with left-invariant Riemannian metrics),
that are neither natural reductive, nor geodesic orbit. Let us discuss
some important results in this direction.

It is well known, that all homogeneous Einstein Riemannian manifolds of dimension $\leq 4$ are locally symmetric, hence, naturally reductive and
geodesic orbit, see e.~g. \cite{Bes}.

There is a 5-dimensional non-compact homogeneous Einstein Riemannian manifold, that is not geodesic orbit. Indeed,
C.~Gordon proved that
{\it every Riemannian geodesic orbit manifold of nonpositive Ricci curvature is symmetric} \cite{Gor96} (see also \cite{Nik2013n}
for a more short proof of this result).
On the other hand, there are $5$-dimensional non-symmetric Einstein homogeneous manifold with negative Ricci curvature.
Moreover, there is a one-parameter family of pairwise non-isometric 5-dimensional (non-symmetric) Einstein solvmanifolds, see \cite{Al} or \cite{Nik2005}.

There are 5-dimensional compact homogeneous Einstein Riemannian manifolds, that are not geodesic orbit.
These are the spaces $M_{a,b}=SU(2)\times SU(2)/S^1$, where the circle $S^1=S^1(a,b)$ is given by
$e^{2\pi {\bf i} t} \mapsto
\left(
\begin{array}{cc}
e^{2 a\pi {\bf i} t}&0\\
0&e^{-2 a\pi {\bf i} t}\\
\end{array}
\right)
\times
\left(
\begin{array}{cc}
e^{2 b \pi {\bf i} t}&0\\
0&e^{-2 b\pi {\bf i} t}\\
\end{array}
\right),
$
$t \in \mathbb{R}$, and $(a, b)$ is a nontrivial vector with
integer coordinates such that $a\geq b \geq 0$ and $\operatorname{gcd} (a,b)= 1$.
For every pair $(a,b)\neq (1,1)$, the space $M_{a,b}$ admits exactly one invariant Einstein metric $\rho_{a,b}$ up to a homothety, see
\cite{Rod} and \cite{WZ4}. As it was remarked in \cite{Rod}, all these Einstein metrics  are not naturally reductive, that follows from the results of~\cite{KV1}.
Note also that the results of~\cite{KV} imply that all Riemannian manifolds  $(M_{a,b}, \rho_{a,b})$ are not geodesic orbit.

There is a 6-dimensional compact homogeneous Einstein Riemannian manifold, that is not geodesic orbit.
This is a flag manifold $SU(3)/T_{\max}$ with a K\"{a}hler--Einstein invariant metric (it is different from the $SU(3)$-normal one), see details e.~g. in
\cite{NikRod}. The fact that it is not geodesic orbit follows also from the results of \cite{KV}.

There are 7-dimensional compact homogeneous Einstein Riemannian manifolds, that are not geodesic orbit. For instance, we can consider
the Aloff--Wallach spaces $W_{k,l}$, $(k,l)\neq (1,0)$ and  $(k,l)\neq (1,1)$, supplied with any invariant Einstein metrics.
Each such spaces admits exactly two invariant Einstein metrics, see e.~g. \cite{Nik2004}. Moreover,
using the Einstein equations (1) in \cite{Nik2004}, it is easy to check that
we never have the equality $x_1=x_2=x_3$ for these Einstein metrics. Hence, all these metrics are not geodesic orbit according to the results of
Section \ref{AWspaces}.
\smallskip

The paper is organized as follows: In Section 2 we discuss the problem of description of geodesic orbit Rimannian metrics in terms
of non full isometry groups. In some special cases such reductions could be very useful, see e.~g. Theorem \ref{nonfull3}
for the case of a transitive normal subgroup in the full connected isometry group of a compact geodesic orbit manifold.
In Section 3, we apply Theorem \ref{nonfull3} to describe geodesic orbit Riemannian metrics on the Aloff--Wallach spaces.
Section 4 is devoted to the description of left-invariant geodesic orbit Riemannian metrics on compact simple Lie groups.
On the base of the obtained results,  in  Section 5 we prove  Theorem \ref{einstnongo} stated that
the compact simple Lie group $G_2$ admits a left-invariant Riemannian  metric $\rho$
such that the Riemannian manifold $(G_2,\rho)$ is Einstein but
is not geodesic orbit. We also discuss some related unsolved questions in this last section.

\section{On reductions to smaller isometry groups}

Let $(M=G_1/H_1, g)$ be a homogeneous Riemannian space with compact $H_1$
is and an $\Ad(H_1)$-invariant decomposition
\begin{equation}\label{reductivedecomposition1}
\mathfrak{g}_1=\mathfrak{h}_1\oplus \mathfrak{m}_1,
\end{equation}
where $\mathfrak{g}_1={\rm Lie }(G_1)$ and $\mathfrak{h}_1={\rm Lie}(H_1)$.
The Riemannian metric $g$ is $G_1$-invariant and is determined
by an $\Ad(H_1)$-invariant Euclidean metric $g = (\cdot,\cdot)_1$ on
the space $\mathfrak{m}_1$ which is identified with the tangent
space $T_oM$ at the initial point $o = eH_1$.

We denote the Lie bracket in $\mathfrak{g}_1$ by $[\cdot, \cdot]$, and its $\mathfrak{m}_1$-component according to~(\ref{reductivedecomposition1})
by $[\cdot, \cdot]_{\mathfrak{m}_1}$.
By Lemma \ref{GO-criterion} we have the following:
A homogeneous Riemannian manifold   $(M=G_1/H_1,g)$ with the reductive
decomposition  (\ref{reductivedecomposition1}) is a geodesic orbit space if and
only if  for any $X_1 \in \mathfrak{m}_1$ there is $Z_1 \in
\mathfrak{h}_1$ such that
$([X_1+Z_1,Y_1]_{\mathfrak{m}_1},X_1)_1 =0$ for all $Y_1\in \mathfrak{m}_1$.

\medskip

Now we suppose that {\it a connected closed subgroup $G$ of the Lie group $G_1$ acts transitively
on the Riemannian manifold $(M,g)$}. Then $M$ is diffeomorphic to the homogeneous space $G/H$, where $H=G\cap H_1$.

There is an $\Ad(H)$-invariant decomposition
\begin{equation}\label{reductivedecomposition.n}
\mathfrak{g}=\mathfrak{h}\oplus \mathfrak{m},
\end{equation}
where $\mathfrak{g}={\rm Lie }(G)\subset \mathfrak{g}_1$ and $\mathfrak{h}={\rm Lie}(H)\subset \mathfrak{h}_1$.
The Riemannian metric $g$ is obviously $G$-invariant and is determined
by an $\Ad(H)$-invariant Euclidean metric $g = (\cdot,\cdot)$ on
the space $\mathfrak{m}$ which is also identified with the tangent
space $T_oM$ at the initial point $o = eH$.
\smallskip

Since $H$ is a compact subgroup of $H_1$, there is also
an $\Ad(H)$-invariant decomposition
\begin{equation}\label{reductivedecomposition.nn}
\mathfrak{h}_1=\mathfrak{h}\oplus \widetilde{\mathfrak{h}},
\end{equation}
in particular, $[\mathfrak{h}, \widetilde{\mathfrak{h}}] \subset \widetilde{\mathfrak{h}}$. Recall also that
$[\mathfrak{h}, \mathfrak{m}] \subset  \mathfrak{m}$ and $[\mathfrak{h}_1, \mathfrak{m}_1] \subset  \mathfrak{m}_1$.
\medskip

Since $G$ acts transitively on $M=G_1/H_1$, then we have the equality $\mathfrak{h}_1+\mathfrak{g}=\mathfrak{g}_1$ that is equivalent to the following equality
(as for linear spaces):
\begin{equation}\label{reductivedecomposition.nnn}
\mathfrak{g}_1=\mathfrak{h}_1\oplus \mathfrak{m}
\end{equation}
(this equality should not be an $\Ad(H_1)$-invariant decomposition).
By $[\cdot, \cdot]_{\mathfrak{m}}$ we will denote a $\mathfrak{m}$-component
of the Lie bracket of the Lie algebra $\mathfrak{g}_1$ according to decomposition (\ref{reductivedecomposition.nnn}).
Note, that for $X,Y \in  \mathfrak{g}$ this is consistent with the notation of $\mathfrak{m}$-component of $[X,Y]$ in the
decomposition~(\ref{reductivedecomposition.n}).
\smallskip

For any $X_1\in \mathfrak{m}_1$, we have the following unique decomposition by (\ref{reductivedecomposition.nn}) and  (\ref{reductivedecomposition.nnn}):
$$
X_1=X+W+\widetilde{W},
$$
where $X\in \mathfrak{m}$, $W\in \mathfrak{h}$ and $\widetilde{W}\in \widetilde{\mathfrak{h}}$. Now we can define three maps
$\pi:\mathfrak{m}_1 \rightarrow \mathfrak{m}$, $l:\mathfrak{m} \rightarrow \mathfrak{h}$, and
$\widetilde{l}:\mathfrak{m} \rightarrow \widetilde{\mathfrak{h}}$ as follows: we put $\pi (X_1)=X$, $l(X)=W$, and $\widetilde{l}(X)=\widetilde{W}$,
where $X_1=X+W+\widetilde{W}$ as above. It is easy to see that $\pi$ is bijective, therefore, the maps $l$ and $\widetilde{l}$ are correctly defined.
\smallskip

Since $(\cdot,\cdot)_1$ on
the space $\mathfrak{m}_1$ and $(\cdot,\cdot)$ on
the space $\mathfrak{m}$ generate one and the same Rimannian metric $g$ on $M$, we have the equality
\begin{equation}\label{innprod}
(X_1,Y_1)_1=(\pi(X_1),\pi(Y_1)), \quad  \quad X_1,Y_1 \in \mathfrak{m}_1.
\end{equation}
\medskip

Let us define one useful map $L:\widetilde{\mathfrak{h}} \times \mathfrak{m} \rightarrow \mathfrak{m}$ by the equality
\begin{equation}\label{vsmap}
L(U,X)=[U,X]_{\mathfrak{m}}\quad \mbox{(according to (\ref{reductivedecomposition.nnn}))}, \quad  \quad U\in \widetilde{\mathfrak{h}},\,X \in \mathfrak{m}.
\end{equation}
\medskip

Note that for a fixed $U$, the operator $L(U,\cdot):\mathfrak{m} \rightarrow \mathfrak{m}$ is skew-symmetric with respect to $(\cdot, \cdot)$. Indeed,
$$([U,X]_{\mathfrak{m}},X)=([U,X]_{\mathfrak{m}_1},X_1)_1=([U,X_1]_{\mathfrak{m}_1},X_1)_1=0, $$
where $X_1=\pi^{-1}(X)$, since the restriction od $\ad(U)$ on $\mathfrak{m}_1$ is skew-symmetric.
\medskip

Now, we are going to get the following

\begin{theorem}\label{nonfull1n}
A Riemannian manifold $(M=G_1/H_1,g)$ with $G_1$-invariant metric $g$ is a geodesic orbit space
{\rm(}with respect to the group $G_1${\rm)} if and only if for any $X \in \mathfrak{m}$ there are
$V\in \mathfrak{h}$ and $U \in \widetilde{\mathfrak{h}}$ such that for any $Y \in \mathfrak{m}$
the equality
$$
([X+V,Y]_{\mathfrak{m}},X)+(L(U,Y),X)=0
$$
holds {\rm(}see decomposition {\rm(\ref{reductivedecomposition.n}) and {\rm(\ref{vsmap})})}.
\end{theorem}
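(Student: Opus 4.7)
The plan is to apply Lemma~\ref{GO-criterion} to the presentation $(M=G_1/H_1, g)$ and translate the resulting condition through the bijection $\pi : \mathfrak{m}_1 \to \mathfrak{m}$, checking that the correction terms arising from the mismatch between decompositions (\ref{reductivedecomposition1}) and (\ref{reductivedecomposition.nnn}) vanish. By Lemma~\ref{GO-criterion}, $(M, g)$ is a GO space with respect to $G_1$ if and only if, for every $X_1 \in \mathfrak{m}_1$, there is $Z_1 \in \mathfrak{h}_1$ with $([X_1+Z_1, Y_1]_{\mathfrak{m}_1}, X_1)_1 = 0$ for all $Y_1 \in \mathfrak{m}_1$; via $\pi$ the outer quantifier becomes ``for every $X \in \mathfrak{m}$''.

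Given $X \in \mathfrak{m}$, I would set $X_1 = \pi^{-1}(X) = X + l(X) + \widetilde{l}(X)$ and write $Z_1 = V_0 + U_0$ per (\ref{reductivedecomposition.nn}). Then $X_1 + Z_1 = X + V + U$ with $V := V_0 + l(X) \in \mathfrak{h}$ and $U := U_0 + \widetilde{l}(X) \in \widetilde{\mathfrak{h}}$, which is a bijective reparametrization. The key intermediate identity is
\[
\pi([A, B]_{\mathfrak{m}_1}) = [A, B]_{\mathfrak{m}} \qquad \text{for all } A, B \in \mathfrak{g}_1,
\]
where the left bracket is projected per (\ref{reductivedecomposition1}) and the right per (\ref{reductivedecomposition.nnn}); this is immediate from the unique decomposition $C = \pi(C) + l(\pi(C)) + \widetilde{l}(\pi(C))$ valid for every $C \in \mathfrak{m}_1$, which already matches (\ref{reductivedecomposition.nnn}). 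Combined with (\ref{innprod}), the GO condition becomes: for all $Y_1 \in \mathfrak{m}_1$, $([X+V+U, Y_1]_{\mathfrak{m}}, X) = 0$.

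The last step is the passage from $Y_1 \in \mathfrak{m}_1$ to $Y \in \mathfrak{m}$. Writing $Y_1 = Y + l(Y) + \widetilde{l}(Y)$, the bracket $[X+V+U, Y_1]$ splits as $[X+V+U, Y] + [X+V+U, l(Y)+\widetilde{l}(Y)]$. In the second summand, $[V+U, l(Y)+\widetilde{l}(Y)] \in [\mathfrak{h}_1, \mathfrak{h}_1] \subset \mathfrak{h}_1$ contributes nothing after the $\mathfrak{m}$-projection, while the cross term $[X, l(Y)+\widetilde{l}(Y)]$ satisfies, by the same skew-symmetry argument shown just before the theorem,
\[
([X, l(Y)+\widetilde{l}(Y)]_{\mathfrak{m}}, X) = -([l(Y)+\widetilde{l}(Y), X_1]_{\mathfrak{m}_1}, X_1)_1 = 0,
\]
since $[l(Y)+\widetilde{l}(Y), X]$ and $[l(Y)+\widetilde{l}(Y), X_1]$ differ by an element of $\mathfrak{h}_1$ and $\ad(l(Y)+\widetilde{l}(Y))$ is skew-symmetric on $\mathfrak{m}_1$ with respect to $(\cdot,\cdot)_1$. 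What remains is exactly $([X+V, Y]_{\mathfrak{m}}, X) + (L(U, Y), X) = 0$ for all $Y \in \mathfrak{m}$, i.e.\ the stated condition.

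The main obstacle is the bookkeeping between the two distinct $\mathfrak{m}$-projections (one from (\ref{reductivedecomposition1}), one from (\ref{reductivedecomposition.nnn})); once the identity $\pi([A,B]_{\mathfrak{m}_1}) = [A,B]_{\mathfrak{m}}$ is set up, every remaining vanishing reduces either to the ideal property $[\mathfrak{h}_1, \mathfrak{h}_1] \subset \mathfrak{h}_1$ or to the $\Ad(H_1)$-invariance of $(\cdot,\cdot)_1$ on $\mathfrak{m}_1$.
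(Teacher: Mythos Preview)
Your argument is correct and follows essentially the same route as the paper: both proofs translate the criterion of Lemma~\ref{GO-criterion} through the bijection $\pi$ using the identity $\pi([A,B]_{\mathfrak{m}_1})=[A,B]_{\mathfrak{m}}$ (which the paper phrases as $U_{\mathfrak{m}_1}-U_{\mathfrak{m}}\in\mathfrak{h}_1$), reparametrize $Z_1\leftrightarrow(V,U)$ via $V=l(X)+Z$, $U=\widetilde{l}(X)+\widetilde{Z}$, and eliminate the $l(Y)$, $\widetilde{l}(Y)$ cross terms by skew-symmetry of $\ad(\mathfrak{h}_1)$ on $(\mathfrak{m}_1,(\cdot,\cdot)_1)$. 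The only cosmetic difference is that you treat $l(Y)+\widetilde{l}(Y)$ as a single element of $\mathfrak{h}_1$, whereas the paper handles $l(Y)$ (via skew-symmetry on $\mathfrak{m}$) and $\widetilde{l}(Y)$ (via skew-symmetry on $\mathfrak{m}_1$) separately.
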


\begin{remark}
Let us emphasize that the $\mathfrak{m}$-component in the last equation can be considered according to the decomposition (\ref{reductivedecomposition.n}).
Formally, for $\mathfrak{g}=\mathfrak{g}_1$ the space $\widetilde{\mathfrak{h}}$ and the map $L$ are trivial, hence, in this partial case Theorem \ref{nonfull1n}
is equivalent to Lemma \ref{GO-criterion}.
\end{remark}

\begin{proof}
Suppose that $(M,g)$ is a geodesic orbit space.
Consider $X_1=\pi^{-1}(X)$ and $Y_1=\pi^{-1}(Y)$. By Lemma \ref{GO-criterion} there is $Z_1\in \mathfrak{h}_1$ such that
$([X_1+Z_1,Y_1]_{\mathfrak{m}_1},X_1)_1=0$. By (\ref{reductivedecomposition.nn}) we have $Z_1=Z+\widetilde{Z}$, where
$Z\in\mathfrak{h}$ and $\widetilde{Z}\in \widetilde{\mathfrak{h}}$. Moreover, by the above discussion we have
$$
X_1=X+l(X)+\widetilde{l}(X),\quad Y_1=Y+l(Y)+\widetilde{l}(Y).
$$
Note that
$$
0=([X_1+Z_1,Y_1]_{\mathfrak{m}_1},X_1)_1=(\pi([X_1+Z_1,Y_1]_{\mathfrak{m}_1}),\pi(X_1))=([X_1+Z_1,Y_1]_{\mathfrak{m}}, X),
$$
since $U_{\mathfrak{m}_1}-U_{\mathfrak{m}}\in \mathfrak{h}_1$ for any $U\in \mathfrak{g}_1$.
Further, we have (recall, that $[\mathfrak{h}, \mathfrak{m}] \subset  \mathfrak{m}$)
$$
[X_1+Z_1,Y_1]_{\mathfrak{m}}=[X+l(X)+Z+\widetilde{l}(X)+\widetilde{Z},Y+l(Y)+\widetilde{l}(Y)]_{\mathfrak{m}}=
$$
$$
[X,Y]_{\mathfrak{m}}+[l(X)+Z,Y]+[\widetilde{l}(X)+\widetilde{Z},Y]_{\mathfrak{m}}+[X,l(Y)]+[X,\widetilde{l}(Y)]_{\mathfrak{m}}.
$$
Since the restriction of $\ad(l(Y))$ on $\mathfrak{m}$ is skew-symmetric, we get $([X,l(Y)],X)=0$. Moreover,
the restriction of $\ad(\widetilde{l}(Y))$ on $\mathfrak{m}_1$ is also skew-symmetric, hence we get
$([X,\widetilde{l}(Y)]_{\mathfrak{m}},X)=([X,\widetilde{l}(Y)]_{\mathfrak{m}_1},X_1)_1=([X_1,\widetilde{l}(Y)]_{\mathfrak{m}_1},X_1)_1=0$.
Therefore, we have
$$
0=([X_1+Z_1,Y_1]_{\mathfrak{m}}, X)=([X,Y]_{\mathfrak{m}},X)+([l(X)+Z,Y],X)+([\widetilde{l}(X)+\widetilde{Z},Y]_{\mathfrak{m}},X)
$$
Putting $V:=l(X)+Z$ and $U:=\widetilde{l}(X)+\widetilde{Z}$, we get
$([X+V,Y]_{\mathfrak{m}},X)+(L(U,Y),X)=0$.
\smallskip

Conversely, suppose that for any $X \in \mathfrak{m}$ there are
$V\in \mathfrak{h}$ and $U \in \widetilde{\mathfrak{h}}$ such $([X+V,Y]_{\mathfrak{m}},X)+(L(U,Y),X)=0$ that for any $Y \in \mathfrak{m}$.
Consider the vectors $X_1=\pi^{-1}(X)$, $Y_1=\pi^{-1}(Y)$ and the decompositions
$X_1=X+l(X)+\widetilde{l}(X)$, $Y_1=Y+l(Y)+\widetilde{l}(Y)$.
If we put $Z:=V-l(X)$ and $\widetilde{Z}=U-\widetilde{l}(X)$, then we deduce (performing the above calculations in the reverse order) the equality
$$
0=([X_1+Z_1,Y_1]_{\mathfrak{m}}, X)=([X_1+Z_1,Y_1]_{\mathfrak{m}_1},X_1)_1,
$$
where $Z_1=Z+\widetilde{Z}$.
Since the map $\pi$ is bijective, by Lemma \ref{GO-criterion} we get that  $(M,g)$ is a  geodesic orbit space.
\end{proof}
\smallskip

In Theorem \ref{nonfull1n} one may consider the full connected isometry group of $(M, g)$ as $G_1$.
Hence we get

\begin{theorem}\label{nonfull1}
A Riemannian homogeneous manifold $(M,g)$ is a  geodesic orbit manifold if and only if for any $X \in \mathfrak{m}$ there are
$V\in \mathfrak{h}$  and $U \in \widetilde{\mathfrak{h}}$ {\rm(}see {\rm(\ref{reductivedecomposition.nn}))} such that for any $Y \in \mathfrak{m}$
the equality
$$
([X+V,Y]_{\mathfrak{m}},X)+(L(U,Y),X)=0
$$
holds {\rm(}see  and decomposition {\rm(\ref{reductivedecomposition.n})} and {\rm(\ref{vsmap})},
where $G_1$ is the full connected isometry group of $(M,g)${\rm)}.
\end{theorem}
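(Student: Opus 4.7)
My plan is to derive Theorem \ref{nonfull1} as an essentially immediate specialization of Theorem \ref{nonfull1n}. The setup in Section 2 preceding Theorem \ref{nonfull1n} allows $G_1$ to be an arbitrary transitive Lie group of isometries of $(M,g)$ with compact isotropy $H_1$, and $G$ to be any closed connected transitive subgroup of $G_1$; the criterion in Theorem \ref{nonfull1n} then characterizes when $(M=G_1/H_1,g)$ is a geodesic orbit space with respect to the action of $G_1$. The task is just to feed the right $G_1$ into this machine.

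First, I would take $G_1$ to be the full connected isometry group of $(M,g)$, and write $M=G_1/H_1$ where $H_1$ is the stabilizer of the origin. The group $H_1$ is automatically compact, since it fixes a point of a Riemannian manifold. By the definition recalled in the introduction, $(M,g)$ is a geodesic orbit manifold if and only if it is a geodesic orbit space with respect to this $G_1$: every $1$-parameter subgroup of the full isometry group is connected and contains the identity, hence lies in the identity component $G_1$, so the two a priori distinct notions coincide.

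Next, I would let $G$ be any closed connected transitive subgroup of $G_1$ (for instance, the original group appearing in a given presentation of $(M,g)$ as a homogeneous space), and apply Theorem \ref{nonfull1n} to the pair $(G_1, G)$. The $\Ad(H)$-invariant decompositions (\ref{reductivedecomposition.n}) and (\ref{reductivedecomposition.nn}), together with the skew-symmetric map $L$ from (\ref{vsmap}), are then exactly the ingredients that appear in the statement of Theorem \ref{nonfull1}, so the criterion of Theorem \ref{nonfull1n} translates verbatim into the stated criterion.

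Since the substantive work already resides in Theorem \ref{nonfull1n}, there is no real obstacle to overcome in Theorem \ref{nonfull1}; it is essentially a cost-free consequence. The point of formulating this specialization, to be exploited in Sections 3--4, is that the criterion now detects geodesic orbit status with respect to the full isometry group while being phrased in terms of the Lie algebra data of a possibly much smaller transitive subgroup $G$. One subtlety worth flagging is that $\widetilde{\mathfrak{h}}$ and $L$ still depend on $G_1$, so applying the theorem in practice requires some information about the full isometry algebra beyond $\mathfrak{g}$ itself; this is the only point where any care is needed.
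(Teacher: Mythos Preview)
Your proposal is correct and matches the paper's own argument: the paper also derives Theorem~\ref{nonfull1} simply by taking $G_1$ in Theorem~\ref{nonfull1n} to be the full connected isometry group of $(M,g)$. Your additional remarks (why the connected component suffices, and the residual dependence of $\widetilde{\mathfrak{h}}$ and $L$ on $G_1$) are accurate elaborations but not needed for the proof itself.
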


Since the operator $\ad(V):\mathfrak{m} \rightarrow \mathfrak{m}$ is skew-symmetric and
the operator $L(U,\cdot):\mathfrak{m} \rightarrow \mathfrak{m}$ is skew-symmetric (with respect to $(\cdot, \cdot)$) for any fixed $U$, we get

\begin{corollary}\label{simplecor}
If a Riemannian manifold $(M=G_1/H_1,g)$ is geodesic orbit, then for any $X \in \mathfrak{m}$ there is
a skew-symmetric {\rm(}with respect to $g=(\cdot, \cdot)${\rm)} endomorphism $L :\mathfrak{m} \rightarrow \mathfrak{m}$
such that for any $Y \in \mathfrak{m}$
the equality
$([X,Y]_{\mathfrak{m}},X)+(L(Y),X)=0$
holds.
\end{corollary}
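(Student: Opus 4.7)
The plan is to derive this directly from Theorem \ref{nonfull1n}, using the skew-symmetry properties already established in the preceding discussion. The endomorphism $L$ to be produced for a given $X$ will be the sum of two pieces: an inner-derivation part coming from $V \in \mathfrak{h}$, and the mixing operator $L(U,\cdot)$ coming from $U \in \widetilde{\mathfrak{h}}$. Both pieces are skew-symmetric on $\mathfrak{m}$ with respect to $(\cdot,\cdot)$, so the sum will be too.

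More precisely, fix $X \in \mathfrak{m}$. By Theorem \ref{nonfull1n} (or Theorem \ref{nonfull1} if one takes $G_1$ to be the full connected isometry group), there exist $V \in \mathfrak{h}$ and $U \in \widetilde{\mathfrak{h}}$ such that
\begin{equation*}
([X+V,Y]_{\mathfrak{m}},X)+(L(U,Y),X)=0 \qquad \text{for every } Y \in \mathfrak{m}.
\end{equation*}
Expanding the first bracket and using that $[\mathfrak{h},\mathfrak{m}]\subset\mathfrak{m}$ (so that $[V,Y]_{\mathfrak{m}}=[V,Y]=\ad(V)(Y)$), this becomes
\begin{equation*}
([X,Y]_{\mathfrak{m}},X) + (\ad(V)(Y),X) + (L(U,Y),X)=0.
\end{equation*}

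Now define the endomorphism $L:\mathfrak{m}\to\mathfrak{m}$ by $L(Y):=\ad(V)(Y)+L(U,Y)$. The operator $\ad(V)|_{\mathfrak{m}}$ is skew-symmetric with respect to $(\cdot,\cdot)$ because $V\in\mathfrak{h}$ and the inner product on $\mathfrak{m}$ is $\Ad(H)$-invariant; the operator $L(U,\cdot)$ is skew-symmetric by the calculation immediately preceding Theorem \ref{nonfull1n} (which uses skew-symmetry of $\ad(U)$ on $\mathfrak{m}_1$ together with the isometric identification via $\pi$). Hence $L$ is skew-symmetric, and the displayed equality above is exactly the required identity $([X,Y]_{\mathfrak{m}},X)+(L(Y),X)=0$.

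There is no real obstacle here: the corollary is essentially a repackaging of Theorem \ref{nonfull1n}, obtained by forgetting the separate roles of $V$ and $U$ and combining them into a single skew-symmetric endomorphism. The only point worth noting is the direction of the implication — one loses information in going from Theorem \ref{nonfull1n} to the corollary, since the existence of a skew-symmetric $L$ satisfying the identity does not by itself guarantee that $L$ arises from some $V\in\mathfrak{h}$ and $U\in\widetilde{\mathfrak{h}}$. So the statement is a one-way necessary condition, convenient for ruling out the GO-property in concrete examples.
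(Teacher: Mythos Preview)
Your proof is correct and follows exactly the paper's approach: the paper's entire argument is the single sentence preceding the corollary, noting that both $\ad(V)|_{\mathfrak{m}}$ and $L(U,\cdot)$ are skew-symmetric, so their sum furnishes the required $L$. You have simply written this out in full detail, including the expansion $[X+V,Y]_{\mathfrak{m}}=[X,Y]_{\mathfrak{m}}+\ad(V)(Y)$, which the paper leaves implicit.
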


\medskip

We will use the notations $\mathfrak{n}_{\mathfrak{g}}(\mathfrak{h})$, $ \mathfrak{c}_{\mathfrak{g}}(\mathfrak{h})$, and $ \mathfrak{c}(\mathfrak{h})$
for the normalizer of $\mathfrak{h}$ in $\mathfrak{g}$, the centralizer of $\mathfrak{h}$ in $\mathfrak{g}$, and the center of $\mathfrak{h}$ respectively.
It is clear that $\mathfrak{c}(\mathfrak{h})=\mathfrak{c}_{\mathfrak{g}}(\mathfrak{h}) \cap \mathfrak{h}$.
It is easy to see the following: if $W \in \mathfrak{c}_{\mathfrak{g}}(\mathfrak{h})$ and $W=W_1+W_2$, where
$W_1 \in  \mathfrak{m}$ and $W_2 \in  \mathfrak{h}$, then $W_1,W_2 \in \mathfrak{c}_{\mathfrak{g}}(\mathfrak{h})$.

\begin{prop}\label{nonfull2}
Let $(M=G/H,g)$ be a  homogeneous Riemannian space with reductive decomposition {\rm(\ref{reductivedecomposition.n})} and
let $K$ be a compact subgroup of $G$ such that $K \cap H =\{e\}$ and $\mathfrak{k}:=\Lie(K)\subset \mathfrak{n}_{\mathfrak{g}}(\mathfrak{h})$.
Suppose also that the inner product $(\cdot,\cdot)$ is $\Ad(K)$-invariant.
If
for any $X \in \mathfrak{m}$ there are
$V\in \mathfrak{h}$ and $W\in \mathfrak{k}$
such that for any $Y \in \mathfrak{m}$
the equality
$$
([X+V+W,Y]_{\mathfrak{m}},X)=0
$$
holds, then $(M,g)$ is a geodesic orbit space with respect the group $G\times K$ that acts isometrically on $(M,g)$ via $(a,b) (cH)=acHb^{-1}$.
\end{prop}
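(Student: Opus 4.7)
The natural approach is to apply Lemma \ref{GO-criterion} directly to $M$ regarded as a homogeneous space under the larger group $G_1 := G\times K$, acting by $(a,b)\cdot cH = acHb^{-1}$. First I would verify that this formula gives a well-defined isometric action. It is well-defined because $\mathfrak{k}\subset \mathfrak{n}_{\mathfrak{g}}(\mathfrak{h})$ (so that at least at the Lie algebra level $K$ normalizes $\mathfrak{h}$, and hence $cHb^{-1}=cb^{-1}H$ on the relevant connected components). The left $G$-factor is isometric since $g$ is $G$-invariant; the right $K$-factor translates each tangent space $T_{cH}M$ by a map corresponding to $\Ad(b)$ on $\mathfrak{m}$, which preserves the inner product $(\cdot,\cdot)$ by hypothesis. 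The $\Ad(K)$-invariance of $(\cdot,\cdot)$ also forces $\Ad(K)\mathfrak{m}=\mathfrak{m}$, so that $[\mathfrak{k},\mathfrak{m}]\subset\mathfrak{m}$.

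Next I would identify the isotropy. A tangent vector at $o=eH$ produced by $(X,W)\in\mathfrak{g}\oplus\mathfrak{k}=\Lie(G_1)$ equals $(X-W)_{\mathfrak{m}}$, so using $K\cap H=\{e\}$ one finds
$$
\mathfrak{h}_1=\bigl\{(V+W,\,W):V\in\mathfrak{h},\,W\in\mathfrak{k}\bigr\}.
$$
I would then take as reductive complement
$$
\mathfrak{m}_1=\bigl\{(X,0):X\in\mathfrak{m}\bigr\},
$$
which clearly yields $\mathfrak{g}_1=\mathfrak{h}_1\oplus\mathfrak{m}_1$. The $\Ad(H_1)$-invariance of $\mathfrak{m}_1$ follows from $[\mathfrak{h},\mathfrak{m}]\subset\mathfrak{m}$ together with $[\mathfrak{k},\mathfrak{m}]\subset\mathfrak{m}$ noted above, since
$$
[(V+W,W),(X,0)]=([V+W,X],0)\in\mathfrak{m}_1.
$$
Under the canonical identification of $\mathfrak{m}_1$ with $T_oM$, the induced inner product $(\cdot,\cdot)_1$ agrees with $(\cdot,\cdot)$ via $(X,0)\mapsto X$.

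The proof is then concluded by verifying the criterion of Lemma \ref{GO-criterion} for $(M=G_1/H_1,g)$. Given $\tilde X=(X,0)\in\mathfrak{m}_1$, the hypothesis supplies $V\in\mathfrak{h}$ and $W\in\mathfrak{k}$ with $([X+V+W,Y]_{\mathfrak{m}},X)=0$ for all $Y\in\mathfrak{m}$. I put $Z_1:=(V+W,W)\in\mathfrak{h}_1$. Since the bracket in $\mathfrak{g}\oplus\mathfrak{k}$ is componentwise, for $\tilde Y=(Y,0)\in\mathfrak{m}_1$ we get
$$
[\tilde X+Z_1,\tilde Y]=\bigl([X+V+W,Y],\,0\bigr),
$$
and since the second coordinate is zero, the $\mathfrak{m}_1$-component is simply $([X+V+W,Y]_{\mathfrak{m}},0)$. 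Pairing with $\tilde X$ in $(\cdot,\cdot)_1$ yields precisely $([X+V+W,Y]_{\mathfrak{m}},X)$, which vanishes by the hypothesis.

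The only genuine subtlety I anticipate is ensuring that $\mathfrak{m}$ can (and should) be taken $\Ad(K)$-invariant, so that $[\mathfrak{k},\mathfrak{m}]\subset\mathfrak{m}$ and the reductive decomposition for $G_1/H_1$ is bona fide; this is essentially forced by the $\Ad(K)$-invariance of $(\cdot,\cdot)$ and the choice of a common $\Ad(HK)$-invariant complement, and once this is in place the remaining computations are purely formal manipulations with the bracket in $\mathfrak{g}\oplus\mathfrak{k}$.
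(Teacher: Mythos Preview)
Your argument is correct. It differs from the paper's route only in packaging: the paper invokes its own Theorem~\ref{nonfull1n}, setting $\widetilde{\mathfrak{h}}=\diag(\mathfrak{k})$ and $U=(W,W)$ so that $L(U,Y)=[W,Y]_{\mathfrak{m}}$, whereas you bypass that machinery and apply Lemma~\ref{GO-criterion} directly to $G_1=G\times K$ with the explicit reductive complement $\mathfrak{m}_1=\{(X,0):X\in\mathfrak{m}\}$ and $Z_1=(V+W,W)\in\mathfrak{h}_1$. Both approaches identify the same isotropy $\mathfrak{h}_1=\{(V+W,W):V\in\mathfrak{h},\,W\in\mathfrak{k}\}$, and your final bracket computation is exactly what Theorem~\ref{nonfull1n} would unwind to in this special case. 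Your version is somewhat more self-contained; in particular it does not rely on the paper's preliminary claim that $[\mathfrak{k},\mathfrak{h}]=0$, which the paper needs so that $\diag(\mathfrak{k})$ is an $\Ad(H)$-invariant complement but which your choice of $\mathfrak{m}_1$ avoids entirely. The one point you correctly flag---that the stated $\Ad(K)$-invariance of $(\cdot,\cdot)$ on $\mathfrak{m}$ already presupposes $\Ad(K)\mathfrak{m}\subset\mathfrak{m}$, hence $[\mathfrak{k},\mathfrak{m}]\subset\mathfrak{m}$---is indeed the only place requiring care, and your reading of the hypothesis is the intended one.
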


\begin{proof} It is easy to see that $[ \mathfrak{k}, \mathfrak{h}]=0$.
Since $(\cdot,\cdot)$ is $\Ad(K)$-invariant, the the group $G\times K$ acts isometrically on $(M,g)$ via $(a,b) (cH)=acHb^{-1}$.
The Lie algebra of the group $G_1:=G\times K$ has the form $\mathfrak{g}_1=\mathfrak{g}\oplus \mathfrak{k}$,  and
$\mathfrak{h}_1=\mathfrak{h}\oplus \diag(\mathfrak{k})$ (here $\mathfrak{h}$ is supposed embedded in $\mathfrak{g}_1=\mathfrak{g}\oplus \mathfrak{k}$
via $X \mapsto (X,0)$). Hence, (in the above notations) we can choose $\widetilde{\mathfrak{h}}:=\diag(\mathfrak{k})$
(see (\ref{reductivedecomposition.nn})).

By condition of the proposition, for any $X \in \mathfrak{m}$ there are
$V\in \mathfrak{h}$ and $W \in  \mathfrak{c}_{\mathfrak{g}}(\mathfrak{h})$ such that $([X+V,Y]_{\mathfrak{m}},X)+([W,Y]_{\mathfrak{m}},X)=0$
for any $Y \in \mathfrak{m}$. If we take $U=(W,W)\in \mathfrak{g}\oplus \mathfrak{k} =\mathfrak{g}_1$, then obviously we get
$([X+V,Y]_{\mathfrak{m}},X)+(L(U,Y),X)=0$. Therefore,  applying Theorem~\ref{nonfull1n}, we see that
$(M,g)$ is a GO-space with respect $G_1=G\times K$.
\end{proof}

\medskip

Now we consider more closely a special case when  $(M,g)$ is a compact homogeneous Riemannian manifold,
and the normal subgroup $G$ of the full connected isometry group $G_1$ of $(M,g)$ acts transitively on $M$.

\smallskip

In the case when $G/H$ is compact in Proposition \ref{nonfull2}, we can reformulate the assertion of this proposition in terms
of a metric endomorphism.
The (compact) Lie algebra $\mathfrak{g}$ admits a bi-invariant inner product $\langle \cdot ,\cdot \rangle$. It is easy to see that there is
$\Ad(H)$-equivariant linear operator $A:\mathfrak{m} \rightarrow \mathfrak{m}$ such that $(\cdot,\cdot)=\langle A \cdot, \cdot \rangle|_{\mathfrak{m}}$.
Obviously, this operator is non-degenarate and symmetric. It is called a {\it metric endomorphism}.

\begin{prop}\label{nonfull4} Let $(M=G/H,g)$ be a compact homogeneous Riemannian space with reductive decomposition {\rm(\ref{reductivedecomposition.n})} and
let $K$ be a closed subgroup of $G$ such that $K \cap H =\{e\}$, $\mathfrak{k}:=\Lie(K)\subset \mathfrak{n}_{\mathfrak{g}}(\mathfrak{h})$,
the inner product $(\cdot,\cdot)$ is $\Ad(K)$-invariant.
Then the following conditions are equivalent:

{\rm 1)} For any $X \in \mathfrak{m}$ there are
$V\in \mathfrak{h}$ and $W\in \mathfrak{c}_{\mathfrak{g}}(\mathfrak{h}) \cap \mathfrak{m}$
such that for any $Y \in \mathfrak{m}$
the equality
$([X+V+W,Y]_{\mathfrak{m}},X)=0$
holds.

{\rm 2)} For any $X \in \mathfrak{m}$ there are
$V\in \mathfrak{h}$ and $W\in \mathfrak{c}_{\mathfrak{g}}(\mathfrak{h}) \cap \mathfrak{m}$
such that
$[A(X),X+V+W]\in \mathfrak{h}$,
where $A:\mathfrak{m} \rightarrow \mathfrak{m}$ is a metric endomorphism.
\end{prop}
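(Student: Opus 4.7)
The proposition asserts the equivalence of an inner-product identity (condition (1)) with an algebraic inclusion (condition (2)); the dictionary between them is provided by the metric endomorphism $A$ together with the bi-invariance of $\langle\cdot,\cdot\rangle$. I expect the proof to be a short chain of equalities.

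First, I would fix the bi-invariant inner product $\langle\cdot,\cdot\rangle$ on $\mathfrak{g}$ used to define $A$, and take the reductive complement $\mathfrak{m}$ to be the $\langle\cdot,\cdot\rangle$-orthogonal complement of $\mathfrak{h}$ --- the natural choice consistent with $(\cdot,\cdot)=\langle A\cdot,\cdot\rangle|_{\mathfrak{m}}$. Then $\mathfrak{h}\perp\mathfrak{m}$, and $A:\mathfrak{m}\to\mathfrak{m}$ is $\langle\cdot,\cdot\rangle$-symmetric. In particular, for any $U\in\mathfrak{m}$ and any $Z\in\mathfrak{g}$ one has $\langle U,Z\rangle=\langle U,Z_{\mathfrak{m}}\rangle$, where $Z_{\mathfrak{m}}$ is the $\mathfrak{m}$-component of $Z$.

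Second, set $\Xi:=X+V+W$ with $X\in\mathfrak{m}$, $V\in\mathfrak{h}$, and $W\in\mathfrak{c}_{\mathfrak{g}}(\mathfrak{h})\cap\mathfrak{m}$. For an arbitrary $Y\in\mathfrak{m}$ I compute
\[
([\Xi,Y]_{\mathfrak{m}},X)
=\langle AX,[\Xi,Y]_{\mathfrak{m}}\rangle
=\langle AX,[\Xi,Y]\rangle
=\langle[AX,\Xi],Y\rangle.
\]
The first equality is the definition of $A$ together with the symmetry of both forms; the second uses $AX\in\mathfrak{m}$ and $\langle\mathfrak{h},\mathfrak{m}\rangle=0$; the third is the bi-invariance identity $\langle U,[V_1,V_2]\rangle=\langle[U,V_1],V_2\rangle$. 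Condition (1) asks that this quantity vanish for every $Y\in\mathfrak{m}$, which by non-degeneracy of $\langle\cdot,\cdot\rangle|_{\mathfrak{m}}$ is equivalent to $[AX,\Xi]\perp\mathfrak{m}$, i.e.\ to $[AX,\Xi]\in\mathfrak{h}$: this is condition (2). The reverse implication is read off the same chain in the opposite direction.

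I expect no serious obstacle. The hypotheses on $K$ (that $K\cap H=\{e\}$, $\mathfrak{k}\subset\mathfrak{n}_{\mathfrak{g}}(\mathfrak{h})$, and the $\Ad(K)$-invariance of $(\cdot,\cdot)$) play no role in the equivalence itself --- they are included so that this proposition may be combined with Proposition \ref{nonfull2}. The only delicate point is to insist that $\mathfrak{m}$ be orthogonal to $\mathfrak{h}$ with respect to the chosen bi-invariant form; for any other $\Ad(H)$-invariant complement one reduces to this case by the obvious $\Ad(H)$-equivariant projection isomorphism onto $\mathfrak{h}^{\perp}$.
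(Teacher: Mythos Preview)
Your proof is correct and is essentially identical to the paper's own argument: both compute $([X+V+W,Y]_{\mathfrak{m}},X)=\langle A(X),[X+V+W,Y]\rangle=\langle[A(X),X+V+W],Y\rangle$ using the definition of $A$, the $\langle\cdot,\cdot\rangle$-orthogonality of $\mathfrak{h}$ and $\mathfrak{m}$, and bi-invariance, then conclude by non-degeneracy. Your remark that the hypotheses on $K$ are not used in the equivalence itself is correct and worth noting.
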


\begin{proof} Using the fact that  $\langle \cdot ,\cdot \rangle$ is bi-invariant inner product, we get
$$
([X+V+W,Y]_{\mathfrak{m}},X)=\langle [X+V+W,Y]_{\mathfrak{m}},A(X) \rangle=
$$
$$
\langle [X+V+W,Y],A(X) \rangle=-\langle Y,[X+V+W,A(X)] \rangle\,,
$$
that proofs the proposition (recall that $Y\in \mathfrak{m}$ assumed to be arbitrary).
\end{proof}
\smallskip

As a corollary of Theorem \ref{nonfull1}, Propositions \ref{nonfull2} and \ref {nonfull4} we get

\begin{theorem}\label{nonfull3}
Let $(M,g)$ be a compact homogeneous Riemannian manifold with the full connected isometry group $G_1$, and let $G$ be a normal subgroup of $G_1$,
that acts transitively on $M$. Then $(M,g)$ is a geodesic orbit manifold if and only if
for any $X \in \mathfrak{m}$ there are
$V\in \mathfrak{h}$ and $W\in \mathfrak{c}_{\mathfrak{g}}(\mathfrak{h})\cap \mathfrak{m}$ such that for any $Y \in \mathfrak{m}$
the equality
$([X+V+W,Y]_{\mathfrak{m}},X)=0$
holds {\rm(}see {\rm(\ref{reductivedecomposition.n}))}. It is equivalent also to the following: For any $X \in \mathfrak{m}$ there are
$V\in \mathfrak{h}$ and $W\in \mathfrak{c}_{\mathfrak{g}}(\mathfrak{h}) \cap \mathfrak{m}$
such that
$[A(X),X+V+W]\in \mathfrak{h}$,
where $A:\mathfrak{m} \rightarrow \mathfrak{m}$ is the metric endomorphism.
\end{theorem}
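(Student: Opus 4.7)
The plan is to derive Theorem~\ref{nonfull3} from Theorem~\ref{nonfull1} (with $G_1$ the full connected isometry group of $(M,g)$) by exploiting the algebraic consequences of $G$ being a normal subgroup of the compact Lie group $G_1$, and then transferring to the metric-endomorphism form via Proposition~\ref{nonfull4}. Since $M$ is compact, $G_1$ is compact and $\mathfrak{g}_1$ is reductive, so I may fix a complementary ideal giving $\mathfrak{g}_1 = \mathfrak{g} \oplus \mathfrak{k}$ with $[\mathfrak{g}, \mathfrak{k}] = 0$. Transitivity of $G$ gives $\mathfrak{g}_1 = \mathfrak{g} + \mathfrak{h}_1$, so the projection along $\mathfrak{g}$ restricts to a linear isomorphism $\widetilde{\mathfrak{h}} \to \mathfrak{k}$.

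The core computation is to simplify $L(U, \cdot)$ from (\ref{vsmap}) for $U \in \widetilde{\mathfrak{h}}$. Write $U = X_U + U'$ with $X_U \in \mathfrak{g}$ and $U' \in \mathfrak{k}$. I would first show $X_U \in \mathfrak{c}_{\mathfrak{g}}(\mathfrak{h})$: for $h \in \mathfrak{h}$, $[U, h] = [X_U, h]$ (since $[U', \mathfrak{h}] = 0$) lies simultaneously in $\widetilde{\mathfrak{h}}$ (by $\mathrm{Ad}(H)$-invariance) and in $\mathfrak{g}$, so it lies in $\widetilde{\mathfrak{h}} \cap \mathfrak{g} \subset \widetilde{\mathfrak{h}} \cap \mathfrak{h} = \{0\}$. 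Splitting $X_U = h_U + W_U$ with $h_U \in \mathfrak{c}(\mathfrak{h})$ and $W_U \in \mathfrak{c}_{\mathfrak{g}}(\mathfrak{h}) \cap \mathfrak{m}$, and using $[\mathfrak{k}, \mathfrak{g}] = 0$ together with $[\mathfrak{h}, \mathfrak{m}] \subset \mathfrak{m}$, one obtains
\[
L(U, Y) \;=\; [U, Y]_{\mathfrak{m}} \;=\; [X_U, Y]_{\mathfrak{m}} \;=\; [h_U + W_U, Y]_{\mathfrak{m}}, \qquad Y \in \mathfrak{m}.
\]

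For necessity, Theorem~\ref{nonfull1} supplies, for each $X \in \mathfrak{m}$, some $V' \in \mathfrak{h}$ and $U \in \widetilde{\mathfrak{h}}$ with $([X+V', Y]_{\mathfrak{m}}, X) + (L(U, Y), X) = 0$ for all $Y \in \mathfrak{m}$; substituting the simplified form of $L$ and setting $V := V' + h_U \in \mathfrak{h}$, $W := W_U \in \mathfrak{c}_{\mathfrak{g}}(\mathfrak{h}) \cap \mathfrak{m}$ yields the stated equality. For sufficiency, given $V, W$ with $([X+V+W, Y]_{\mathfrak{m}}, X) = 0$, I invoke Proposition~\ref{nonfull2} with a carefully chosen closed compact subgroup $K \subset G$ so that the relevant $W$ lies in $\Lie(K)$ (up to a $\mathfrak{c}(\mathfrak{h})$-summand absorbed into $V$), concluding that $(M, g)$ is GO with respect to $G \times K$ and hence with respect to $G_1$. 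The equivalence with $[A(X), X + V + W] \in \mathfrak{h}$ then follows directly from Proposition~\ref{nonfull4}.

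The main obstacle is carrying out the sufficiency direction cleanly: one must select a subgroup $K$ of $G$ that simultaneously satisfies all the hypotheses of Proposition~\ref{nonfull2}---compactness, $K \cap H = \{e\}$, $\Lie(K) \subset \mathfrak{n}_{\mathfrak{g}}(\mathfrak{h})$, and $\mathrm{Ad}(K)$-invariance of $(\cdot,\cdot)$---while accommodating each prescribed $W \in \mathfrak{c}_{\mathfrak{g}}(\mathfrak{h}) \cap \mathfrak{m}$. The normality of $G$ in $G_1$ (equivalently $[\mathfrak{g}, \mathfrak{k}] = 0$) is precisely what provides enough room to perform this lifting: the extra Killing vector fields coming from $\mathfrak{k}$ are exactly what compensates for the $W$-term that lies in $\mathfrak{m}$ rather than in the isotropy.
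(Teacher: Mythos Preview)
Your proposal is essentially the same as the paper's proof: both split $\mathfrak{g}_1=\mathfrak{g}\oplus\mathfrak{a}$ as ideals, show that the $\mathfrak{g}$-projection of any $U\in\widetilde{\mathfrak{h}}$ lands in $\mathfrak{c}_{\mathfrak{g}}(\mathfrak{h})$ (your $X_U$ is the paper's $\sigma_2(U)$), decompose that projection into its $\mathfrak{h}$- and $\mathfrak{m}$-parts to obtain $V$ and $W$ from Theorem~\ref{nonfull1}, and then defer to Propositions~\ref{nonfull2} and~\ref{nonfull4} for the converse and the metric-endomorphism reformulation. The only difference is emphasis: the paper dispatches sufficiency with the single sentence ``the opposite implication follows directly from Proposition~\ref{nonfull2}'' and does not spell out the choice of $K$ or verify its hypotheses, whereas you correctly flag this as the place where care is needed.
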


\begin{proof}
Under conditions of the theorem, there is an ideal $\mathfrak{a}$ in $\mathfrak{g}_1$ such that
$\mathfrak{g}_1=\mathfrak{a} \oplus \mathfrak{g}$ as Lie algebras. In this case every element $X\in \widetilde{\mathfrak{h}}$
{\rm(}see {\rm(\ref{reductivedecomposition.nn}))}
we may uniquely represent as $X=A+B$, where $A \in \mathfrak{a}$ and $B\in \mathfrak{g}$. Therefore, we get two maps
(in fact, Lie algebra homomorphisms)
$\sigma_1:\widetilde{\mathfrak{h}} \rightarrow \mathfrak{a}$ and $\sigma_2:\widetilde{\mathfrak{h}} \rightarrow \mathfrak{g}$ acting via
$A=\sigma_1(X)$ and $B=\sigma_2(X)$.

Since $[\mathfrak{h},\widetilde{\mathfrak{h}}] \subset \widetilde{\mathfrak{h}}$, then for every $Y\in \mathfrak{h}$ we have $[Y,X]\in \widetilde{\mathfrak{h}}$.
On the other hand, $[Y,X]=[Y,\sigma_1(X)+\sigma_2(X)]=[Y,\sigma_2(X)]\in \mathfrak{g}$. Therefore, $[Y,X]=[Y,\sigma_2(X)]=0$.
In particular, the image of the map $\sigma_2$ lies in $\mathfrak{c}_{\mathfrak{g}}(\mathfrak{h})$ (the centralizer of $\mathfrak{h}$ in $\mathfrak{g}$)
and $[\mathfrak{h},\widetilde{\mathfrak{h}}]=0$. From the last equality we see that $\widetilde{\mathfrak{h}}$ is a Lie algebra, and
$\sigma_1$, $\sigma_2$ are really Lie algebra homomorphisms.

It is easy to see that $\sigma_1$ is bijective, because $\dim \mathfrak{a} =\dim \widetilde{\mathfrak{h}}$ and
$\Ker (\sigma_1) \subset \mathfrak{g}\cap \widetilde{\mathfrak{h}}=\{0\}$.
Hence, $\tau=\sigma_2\circ \sigma_1^{-1}:\mathfrak{a}\rightarrow \mathfrak{g}$
is a well defined homomorphism of Lie algebras, $\tau(\mathfrak{a})$ is a homomorphic image of $\mathfrak{a}$,
and $\mathfrak{a}$ is isomorphic to a direct sum of $\tau(\mathfrak{a})$ and $\tau^{-1}(0)$.
Note that $\tau^{-1}(0)$ is trivial, since $\tau^{-1}(0)$ is an ideal in $\mathfrak{g}_1$ and $\tau^{-1}(0) \subset \mathfrak{h}_1$
(recall, that we have an effective action of $G_1$ on $M$). Therefore, $\tau(\mathfrak{a})$ is a isomorphic to $\mathfrak{a}$ and also
to $\widetilde{\mathfrak{h}}$.

Since $\tau(\mathfrak{a})=\sigma_2(\widetilde{\mathfrak{h}})$ we have $[\mathfrak{h},\tau(\mathfrak{a})]=0$.
It is easy to see also that $\mathfrak{h} \cap \tau(\mathfrak{a})$ is trivial.
\smallskip

Now, let us suppose that $(M,g)$ is a geodesic orbit manifold and fix $X\in\mathfrak{m}$. By
Theorem~\ref{nonfull1}, there are
$V\in \mathfrak{h}$ and $U \in \widetilde{\mathfrak{h}}$ such that $([X+V,Y]_{\mathfrak{m}},X)+(L(U,Y),X)=0$ for any $Y \in \mathfrak{m}$.

From the above discussion we get $U=T+\tau(T)$ for some $T\in \mathfrak{a}$. Since $[\mathfrak{a},\mathfrak{g}]=0$, then
$[U,Y]_{\mathfrak{m}}=[\tau(T),Y]_{\mathfrak{m}}$. If $\tau(T)\in \mathfrak{m}$, then we can take $W:=\tau(T)$.
Hence we get $([X+V+W,Y]_{\mathfrak{m}},X)=0$. But in general $\tau(T)\not\in \mathfrak{m}$.
In any case we have
$\tau(T)=W_1+W_2$, where
$W_1 \in  \mathfrak{m}$ and $W_2 \in  \mathfrak{h}$. Since $\tau(T) \in \mathfrak{c}_{\mathfrak{g}}(\mathfrak{h})$,
then $W_1,W_2 \in \mathfrak{c}_{\mathfrak{g}}(\mathfrak{h})$. We may take $W:=W_1$ and change $V$ to $V+W_2 \in \mathfrak{h}$.
Hence, we again get $([X+V+W,Y]_{\mathfrak{m}},X)=0$.
\smallskip

The opposite implication follows directly from Proposition \ref{nonfull2}. The final assertion is valid due to Proposition \ref {nonfull4}.
\end{proof}
\smallskip

In the next two sections we will apply Theorem \ref{nonfull3} for the study invariant Riemannian metric on the
Aloff--Wallach spaces and left-invariant metrics on compact Lie groups.

\section{Geodesic orbit Riemannian metric on the Aloff--Wallach spaces}\label{AWspaces}

In this section we describe geodesic orbit Riemannian metrics on generic Aloff--Wallach spaces.
The Aloff--Wallach spaces
$W_{k,l}=SU(3)/SO(2)$ are defined
by an embedding of the circle $SO(2)=S^1$ into $SU(3)$ of the type
$$
i_{k,l} : e^{2\pi {\bf i} \theta} \mapsto \diag
(e^{2\pi {\bf i} k \theta},e^{2\pi {\bf i} l\theta},e^{2\pi {\bf i} m\theta}),
$$
where $k$, $l$, $m$ are integers with greatest common divisor $1$
and $k+l+m=0$, ${\bf i}=\sqrt{-1}$. By using the Weil group of
$SU(3)$ one can assume that $k\geq l\geq 0$. These spaces were
studied by S.~Aloff and N.R.~Wallach in \cite{Alof}, where
they showed that $W_{k,l}$  admits an invariant metric  of
positive sectional curvature if and only if $kl(k+l)\neq 0$. Moreover,
$H^4(W_{k,l};\mathbb{Z})=\mathbb{Z}/|k^2+l^2+kl|\mathbb{Z}$,
and hence there are infinitely
many different homotopy types among these spaces. Later M.~Kreck
and S.~Stolz found in \cite{Kreck} that there are homeomorphic but
non-diffeomorphic spaces among the $W_{k,l}$.
Below we list some useful facts about the Aloff-Wallach spaces (see e.~g. \cite{Alof, Kreck, Nik2004}).

Let $\mathfrak{h}=\mathfrak{h}_{k,l}$ be the Lie algebra of the Lie group
$i_{k,l}(S^1)=H_{k,l}=H$ and let $\mathfrak{t}$ be the Lie algebra of
the standard maximal torus $T$ in $SU(3)$.
It is useful to define the value $L=k^2+l^2+m^2$ and to check that
$k^2+l^2+m^2-kl-km-ml=3L/2$.

Let us fix the inner product $\langle X,Y \rangle =-\frac{1}{2} \operatorname{Re} \trace (XY)$
on the Lie algebra $su(3)$.
It is easy to check that that $B(X,Y)= -12\cdot \langle X,Y \rangle$
for the Killing form $B$ of Lie algebra $su(3)$.

Let us consider the following vectors
$$
Z={\bf i}
\left(
\begin{array}{ccc}
k&0&0\\
0&l&0\\
0&0&m\\
\end{array}
\right),
\quad \quad \quad \quad
X_0=\frac{\sqrt{2}{\bf i}}{\sqrt{3L}}
\left(
\begin{array}{ccc}
l-m&0&0\\
0&m-k&0\\
0&0&k-l\\
\end{array}
\right),
$$
$$
X_1=\left(
\begin{array}{ccc}
0&1&0\\
-1&0&0\\
0&0&0\\
\end{array}
\right),
\quad
X_2=\left(
\begin{array}{ccc}
0&{\bf i}&0\\
{\bf i}&0&0\\
0&0&0\\
\end{array}
\right),
\quad
X_3=\left(
\begin{array}{ccc}
0&0&1\\
0&0&0\\
-1&0&0\\
\end{array}
\right),
$$
$$
X_4=\left(
\begin{array}{ccc}
0&0&{\bf i}\\
0&0&0\\
{\bf i}&0&0\\
\end{array}
\right),
\quad
X_5=\left(
\begin{array}{ccc}
0&0&0\\
0&0&1\\
0&-1&0\\
\end{array}
\right),
\quad
X_6=\left(
\begin{array}{ccc}
0&0&0\\
0&0&{\bf i}\\
0&{\bf i}&0\\
\end{array}
\right)
$$
in the Lie algebra $su(3)$.
Note, that the subalgebra $\mathfrak{h}$ is defined by the vector $Z$.
Moreover,
all the vectors
$X_i$ have unit length with respect
to the  inner product $\langle \cdot,\cdot \rangle$,
are mutually orthogonal and are orthogonal to the subalgebra $\mathfrak{h}$.
Let us consider the $\Ad (H)$-modules
$\mathfrak{m}_1=\Lin (X_1,X_2)$,
$\mathfrak{m}_2=\Lin (X_3,X_4)$,
$\mathfrak{m}_3=\Lin (X_5,X_6)$,
$\mathfrak{m}_4=\Lin (X_0)$.

One can obtain the following decomposition
$$
\mathfrak{g}=\mathfrak{t} \oplus \mathfrak{m}_1 \oplus
\mathfrak{m}_2 \oplus \mathfrak{m}_3=
\mathfrak{h}\oplus \mathfrak{m}_4 \oplus \mathfrak{m}_1 \oplus \mathfrak{m}_2
\oplus \mathfrak{m}_3,
$$
i.~e.
$\mathfrak{m}_4$ is the $\langle \cdot,\cdot \rangle$-orthogonal
complement to $\mathfrak{h}=\mathfrak{h}_{k,l}$ in the Lie
algebra $\mathfrak{t}$,
and $\mathfrak{m}=\mathfrak{m}_1 \oplus \mathfrak{m}_2
\oplus \mathfrak{m}_3 \oplus \mathfrak{m}_4$
is the orthogonal complement to
$\mathfrak{h}=\mathfrak{h}_{k,l}$ in $su(3)$.
Direct calculations show that
$[Z,X_0]=0$,
$[Z,X_1]=(k-l)X_2$,
$[Z,X_2]=(l-k)X_1$,
$[Z,X_3]=(k-m)X_4$,
$[Z,X_4]=(m-k)X_3$,
$[Z,X_5]=(l-m)X_6$,
$[Z,X_6]=(m-l)X_5$.

It easy to check that the modules $\mathfrak{m}_i$
are $\Ad (H)$-invariant and
$\Ad (H)$-irreducible for
pairwise distinct
$k$, $l$ and $m$.
Moreover, there is no
pairwise isomorphic
$\Ad (H)$-modules among $\mathfrak{m}_i$, if $(k,l,m)\neq (1,1,-2)$ or $(k,l,m)\neq (1,-1,0)$
(see details in \cite{Nik2004}).
These implies (see e.~g. \cite{Bes}) that every $\Ad (H)$-invariant
inner product $(\cdot , \cdot )$ (i.~e. every $SU(3)$-invariant Riemannian metric on $W_{k,l}$)
has the following form
\begin{equation}\label{awmet}
(\cdot , \cdot )=
x_1  \langle \cdot,\cdot \rangle|_{{\mathfrak{m}}_1}+
x_2  \langle \cdot,\cdot \rangle|_{\mathfrak{m}_2}+
x_3  \langle \cdot,\cdot \rangle|_{\mathfrak{m}_3}+
x_4  \langle \cdot,\cdot \rangle|_{\mathfrak{m}_4}
\end{equation}
for some positive
$x_i$.

\begin{remark}
$W_{1,0}$ and $W_{1,1}$ are  special in the following sense: all other
$W_{k,l}$ has a 4-parameter family of invariant metrics. But on
$W_{1,0}$ and $W_{1,1}$ the set of invariant metrics depends on 6
and 10 parameters respectively, see details in \cite{Nik2004}.
\end{remark}

The most important fact is that the full connected isometry group of every $SU(3)$-invariant Riemannian metric on $W_{k,l}$
for  $(k,l)\neq (1,0)$ and  $(k,l)\neq (1,1)$ is locally isomorphic to $U(3)$ (see e.~g. \cite{Shan}), hence $SU(3)$ is a normal subgroup in this group.
Therefore, we may apply Theorem~\ref{nonfull3} to these homogeneous Riemannian manifolds.
\smallskip

\begin{prop} Geodesic orbit Riemannian metrics on the Aloff--Wallach space $W_{k,l}$, where
$(k,l)\neq (1,0)$ and  $(k,l)\neq (1,1)$, are exactly $SU(3)$-invariant metrics {\rm (\ref{awmet})} with $x_1=x_2=x_3$.
\end{prop}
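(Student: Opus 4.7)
The plan is to apply Theorem \ref{nonfull3} with $G=SU(3)$ and $G_1$ the full connected isometry group (locally isomorphic to $U(3)$), which contains $SU(3)$ as a normal subgroup by the discussion preceding the proposition. A preliminary observation is that under the assumption $(k,l)\neq(1,0),(1,1)$, the diagonal entries $(k,l,m)$ are pairwise distinct, so the centralizer of $\mathfrak{h}=\Lin(Z)$ in $\mathfrak{g}=su(3)$ coincides with the maximal torus $\mathfrak{t}$; consequently $\mathfrak{c}_{\mathfrak{g}}(\mathfrak{h})\cap\mathfrak{m}=\mathfrak{m}_4=\Lin(X_0)$. Theorem \ref{nonfull3} thus reduces the GO-property to the following: for every $X\in\mathfrak{m}$ there exist scalars $v,w\in\mathbb{R}$ such that $([X+vZ+wX_0,Y]_{\mathfrak{m}},X)=0$ for every $Y\in\mathfrak{m}$.

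For the necessity direction, I would fix a nonzero scalar $c$ and consider the pair $X=X_1+cX_3\in\mathfrak{m}_1\oplus\mathfrak{m}_2$ tested against $Y=X_5\in\mathfrak{m}_3$. Using the brackets $[X_1,X_5]=X_3$ and $[X_3,X_5]=-X_1$ (verified directly from matrix units), together with the fact that $[Z,X_5]$ and $[X_0,X_5]$ both lie in $\Lin(X_6)\subset\mathfrak{m}_3$, one finds $[X+vZ+wX_0,X_5]_{\mathfrak{m}}=X_3-cX_1+\varphi(v,w)X_6$ for some scalar $\varphi(v,w)$. Since $X_6\in\mathfrak{m}_3$ is orthogonal to $X\in\mathfrak{m}_1\oplus\mathfrak{m}_2$ with respect to $(\cdot,\cdot)$, the inner product with $X$ collapses to $c(x_2-x_1)=0$, forcing $x_1=x_2$ independently of $v,w$. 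Applying the same recipe to $(X,Y)=(X_1+cX_5,X_3)$ and $(X,Y)=(X_3+cX_5,X_1)$ yields $x_1=x_3$ and $x_2=x_3$ by symmetry.

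For the sufficiency direction, I would use the metric endomorphism form of Theorem \ref{nonfull3}. When $x_1=x_2=x_3=a$ and $x_4=b$, the endomorphism $A$ acts as $a\cdot\Id$ on $\mathfrak{m}_1\oplus\mathfrak{m}_2\oplus\mathfrak{m}_3$ and as $b\cdot\Id$ on $\mathfrak{m}_4$. Writing a general $X\in\mathfrak{m}$ as $X=X'+a_0X_0$ with $X'\in\mathfrak{m}_1\oplus\mathfrak{m}_2\oplus\mathfrak{m}_3$, I take $V=0\in\mathfrak{h}$ and $W=\tfrac{(b-a)a_0}{a}X_0\in\mathfrak{m}_4$. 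Using $[X',X']=0$, $[X_0,X_0]=0$, and $[X_0,Z]=0$, direct expansion of $[A(X),X+V+W]$ collapses to $\bigl(a(a_0+w)-ba_0\bigr)[X',X_0]$, which vanishes by the chosen $w$. Hence $[A(X),X+V+W]=0\in\mathfrak{h}$ for every $X$, and the second form of Theorem \ref{nonfull3} yields the GO-property.

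The main obstacle, conceptual rather than technical, is selecting the test vector $Y$ in the necessity argument so that the GO-equation produces a constraint free of the auxiliary parameters $v,w$. This is arranged by placing $Y$ in an $\mathfrak{m}_k$ different from both summands supporting $X$: the auxiliary contributions $[vZ,Y]$ and $[wX_0,Y]$ then lie in a submodule orthogonal to $X$, so only $([X,Y]_{\mathfrak{m}},X)$ survives in the inner product and imposes a genuine restriction on the $x_i$'s.
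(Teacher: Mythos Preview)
Your proof is correct and uses the same overall framework as the paper (Theorem~\ref{nonfull3} applied with $G=SU(3)$ normal in the full isometry group, so that $\mathfrak{c}_{\mathfrak{g}}(\mathfrak{h})\cap\mathfrak{m}=\Lin(X_0)$). The sufficiency argument is essentially identical to the paper's: both choose $V,W$ so that $X+V+W$ becomes a scalar multiple of $A(X)$, whence the bracket vanishes.

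For necessity the two arguments diverge in style. The paper writes out the full matrix $U=[A(X),X+V+W]$ for a generic $X=\sum\alpha_iX_i$ and extracts three linear combinations of the off-diagonal entries of $U$ that turn out to be independent of the auxiliary parameters $\beta,\gamma$; each such combination equals $(x_i-x_j)$ times a cubic in the $\alpha$'s, forcing $x_1=x_2=x_3$. You instead pick specific pairs $(X,Y)$ with $X\in\mathfrak{m}_i\oplus\mathfrak{m}_j$ and $Y\in\mathfrak{m}_k$ (indices distinct), exploiting that $[vZ+wX_0,Y]\subset\mathfrak{m}_k$ is orthogonal to $X$; this kills the $v,w$ contribution in the GO-equation and leaves the bare constraint $x_i=x_j$. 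Your route is shorter and avoids the global matrix computation, at the cost of not displaying the full obstruction; the paper's route, while heavier, makes it transparent that the three constraints are the \emph{only} ones (since the diagonal of $U$ vanishes identically). Either way the conclusion is the same.
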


\begin{proof}
We will use Theorem \ref{nonfull3} in our proof.
In the basis $(X_0,X_1,\dots,X_6)$ the metric endomorphism for $(\cdot,\cdot)$ (with respect to $\langle X,Y \rangle =-\frac{1}{2} \operatorname{Re} \trace (XY)$)
has the form
$A=\diag(x_4, x_1,x_1, x_2,x_2,x_3,x_3)$. For an arbitrary vector
$X=\sum_{i=0}^6 \alpha_i X_i$, we get
$$
A(X)=x_4 \alpha_0 X_0+x_1\alpha_1 X_1+x_1\alpha_2 X_2+x_2\alpha_3 X_3+x_2\alpha_4 X_4+x_3\alpha_5 X_5+x_3\alpha_6 X_6.
$$
It is clear that $\mathfrak{h}=\Lin(Z)$ and $\mathfrak{c}_{\mathfrak{g}}(\mathfrak{h})\cap \mathfrak{m}=\Lin(X_0)$. Therefore, any vector
$V+W$ (in the notation of Theorem \ref{nonfull3}) has the form
$$
V+W={\bf i}\diag(\beta,\gamma,-\beta-\gamma), \quad \beta,\gamma \in \mathbb{R}.
$$
Using Lie multiplication we can represent the vector $[A(X),X+V+W]$
in the base $(Z,X_0,X_1,\dots,X_6)$. Hence the condition $[A(X),X+V+W]\in \mathfrak{h}=\Lin(Z)$ could be represented
as the system of $7$ linear equations ($\langle [A(X),X+V+W],X_i \rangle=0$, $i=0,1,\dots,6$)
with respect to the variables $\beta$ and $\gamma$ with the parameters
$x_1,x_2,x_3,x_4$ (fixing the metric) and $\alpha_i$, $i=0,...,6$, (fixing the vector $X$).

Hence, a Riemannian manifolds $(W_{k,l},g=(\cdot,\cdot))$ is a geodesic orbit manifold if and only if {\it the above system has a solution for all
sets of parameters}.
Direct calculations show that
$$
X=\left(
\begin{array}{rrr}
\alpha_0(l-m)f {\bf i}&\alpha_1+\alpha_2 {\bf i}&\alpha_3+\alpha_4 {\bf i}\\
-\alpha_1+\alpha_2 {\bf i}&\alpha_0(m-k)f {\bf i}&\alpha_5+\alpha_6 {\bf i}\\
-\alpha_3+\alpha_4 {\bf i}&-\alpha_5+\alpha_6 {\bf i}&\alpha_0(k-l)f {\bf i}\\
\end{array}
\right)\,,
$$
$$
X+V+W=\left(
\begin{array}{rrr}
(\alpha_0(l-m)f +\beta){\bf i}&\alpha_1+\alpha_2 {\bf i}&\alpha_3+\alpha_4 {\bf i}\\
-\alpha_1+\alpha_2 {\bf i}&(\gamma+\alpha_0(m-k)f) {\bf i}&\alpha_5+\alpha_6 {\bf i}\\
-\alpha_3+\alpha_4 {\bf i}&-\alpha_5+\alpha_6 {\bf i}&(-\beta-\gamma+\alpha_0(k-l)f) {\bf i}\\
\end{array}
\right)\,,
$$
$$
A(X)=\left(
\begin{array}{rrr}
x_4\alpha_0(l-m)f {\bf i}&x_1(\alpha_1+\alpha_2 {\bf i})&x_2(\alpha_3+\alpha_4 {\bf i})\\
x_1(-\alpha_1+\alpha_2 {\bf i})&x_4\alpha_0(m-k)f {\bf i}&x_3(\alpha_5+\alpha_6 {\bf i})\\
x_2(-\alpha_3+\alpha_4 {\bf i})&x_3(-\alpha_5+\alpha_6 {\bf i})&x_4\alpha_0(k-l)f {\bf i}\\
\end{array}
\right)\,,
$$
where $f=\sqrt{\frac{2}{3L}}$. The condition to be a geodesic orbit metric is equivalent to the following condition for matrices:
{\it the matrix $[A(X),X+V+W]$ should be a multiple of the matrix $Z={\bf i}\diag(k,l,-k-l)$.}
Let us use the notation $U=(u_{ij})$ for the matrix  $[A(X),X+V+W]$. It is easy to see that $u_{11}=u_{22}=u_{33}=0$.
Therefore, $U=0$ is equivalent to the condition to be a geodesic orbit manifold. We easily to check the following:
$$
\frac{\alpha_2}{2{\bf i}}(u_{12}+u_{21})+\frac{\alpha_1}{2}(u_{12}-u_{21})=({x_{2}} - {x_{3}})\,({\alpha _{2}}\,{\alpha _{4}}\,{
\alpha _{5}} - {\alpha _{2}}\,{\alpha _{3}}\,{\alpha _{6}} + {\alpha _{1}}\,{\alpha _{3}}\,{\alpha _{5}} + {\alpha _{1}}\,{\alpha _{4}}\,{\alpha _{6}}),
$$
$$
\frac{\alpha_4}{2{\bf i}}(u_{13}+u_{31})+\frac{\alpha_3}{2}(u_{13}-u_{31})= ( {x_{3}} - {x_{1}})\,({\alpha _{2}}\,{\alpha _{
4}}\,{\alpha _{5}} - {\alpha _{2}}\,{\alpha _{3}}\,{\alpha _{6}}
 + {\alpha _{1}}\,{\alpha _{3}}\,{\alpha _{5}} + {\alpha _{1}}\,{
\alpha _{4}}\,{\alpha _{6}}),
$$
$$
\frac{\alpha_6}{2{\bf i}}(u_{23}+u_{32})+\frac{\alpha_5}{2}(u_{23}-u_{32})=( {x_{1}} - {x_{2}})\,({\alpha _{2}}\,{\alpha _{4}}
\,{\alpha _{5}} - {\alpha _{2}}\,{\alpha _{3}}\,{\alpha _{6}} + {
\alpha _{1}}\,{\alpha _{3}}\,{\alpha _{5}} + {\alpha _{1}}\,{
\alpha _{4}}\,{\alpha _{6}}).
$$
Hence, if $(W_{k,l},g=(\cdot,\cdot))$ is a geodesic orbit manifold, where $(k,l)\neq (1,0)$ and  $(k,l)\neq (1,1)$, then $x_1=x_2=x_3$.

On the other hand, for $x_1=x_2=x_3=:x$ we can choose
$$
\beta=(x_4/x-1)\alpha_0(l-m)f, \quad \gamma=(x_4/x-1)\alpha_0(m-k)f.
$$
It is easy to see that $X+V+W=x \cdot A(X)$ and, hence,  $U=[A(X),X+V+W]=0$ for such $\beta$ and $\gamma$.
Therefore, $(W_{k,l},g=(\cdot,\cdot))$ is a geodesic orbit manifold for $x_1=x_2=x_3$. Note also that this metric is $SU(3)$-normal homogeneous
(hence $SU(3)$-naturally reductive) for $x_1=x_2=x_3=x_4$ and is $U(3)$-naturally reductive for $x_1=x_2=x_3\neq x_4$.
\end{proof}

\section{Geodesic orbit Riemannian metrics on simple compact Lie groups}

By the Ochiai--Takahashi theorem \cite{OT1976}, the full connected isometry group $\Isom(G,\rho)$
of a simple compact Lie group $G$ with a left-invariant Riemannian metric $\rho$
contains in the group $L(G)R(G)$, the product of left and right translations. Hence $G$ is a normal subgroup in $\Isom(G,\rho)$,
that is locally isomorphic to the group
$G\times K$, where $K$ is a closed subgroup of $G$, with action
$(a,b) (c)=acb^{-1}$, where $a,c \in G$ and $b\in K$.

Consider a bi-invariant inner product $\langle \cdot, \cdot \rangle=-B(\cdot,\cdot)$ on $\mathfrak{g}$. It is easy to see that the metric $\rho$
is generated with an inner product of the following type

\begin{equation}\label{innersimpl1}
(\cdot,\cdot)= x_1\langle \cdot, \cdot \rangle|_{\mathfrak{k}_1}+\cdots+
x_p\langle \cdot, \cdot \rangle|_{\mathfrak{k}_p} +
y_1\langle \cdot, \cdot \rangle|_{\mathfrak{m}_1}+\cdots+
y_q\langle \cdot, \cdot \rangle|_{\mathfrak{m}_q},
\end{equation}
where $\mathfrak{k}=\mathfrak{k}_1 \oplus \cdots \oplus\mathfrak{k}_p$ is a decomposition in simple and 1-dimensional ideals,
$\mathfrak{m}$ is the $\langle \cdot, \cdot \rangle$-orthogonal complement to $\mathfrak{k}$ in $\mathfrak{g}$, and
$\mathfrak{m}=\mathfrak{m}_1 \oplus \cdots \oplus  \mathfrak{m}_q$ is a decomposition into the sum of $\Ad(K)$-invariant and
$\Ad(K)$-irreducible modules.
Note that the coresponding metric endomorphism is of the form $A=\diag(x_1,\dots,x_p, y_1,\dots, y_q)$ on
$\mathfrak{g}=\mathfrak{k}\oplus\mathfrak{m}=\mathfrak{k}_1 \oplus \cdots \oplus\mathfrak{k}_p \oplus \mathfrak{m}_1 \oplus \cdots \oplus  \mathfrak{m}_q$.
\smallskip

From Theorem \ref{nonfull3}, we easily get

\begin{prop}\label{nonfull5} A simple compact Lie group $G$ with a left-invariant Riemannian metric $\rho$
is a geodesic orbit manifold if and only if there is a closed connected subgroup $K$ of $G$ such that
for any $X \in \mathfrak{g}$ there is
$W\in \mathfrak{k}$
such that for any $Y \in \mathfrak{g}$
the equality
$([X+W,Y],X)=0$
holds or, equivalently, $[A(X),X+W]=0$,
where $A:\mathfrak{g} \rightarrow \mathfrak{g}$ is a metric endomorphism.
\end{prop}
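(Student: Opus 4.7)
The plan is to deduce Proposition \ref{nonfull5} from Theorem \ref{nonfull3} (and Proposition \ref{nonfull2} for the converse) by specializing to the setting where $M = G$ carries the left-invariant metric $\rho$ and the full connected isometry group $G_1$ is, by the Ochiai--Takahashi theorem, locally isomorphic to $G \times K$ acting via $(a,b)(c) = acb^{-1}$. I take the subgroup $K$ in the statement to be the connected subgroup whose right translations are isometries of $(G,\rho)$.

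In this situation, $G$ acts on itself by left translations with trivial stabilizer at $e$, so in the notation of Theorem~\ref{nonfull3} we have $H = G \cap \Delta(K) = \{e\}$, hence $\mathfrak{h} = 0$ and $\mathfrak{m} = \mathfrak{g}$. In particular $\mathfrak{c}_{\mathfrak{g}}(\mathfrak{h}) \cap \mathfrak{m} = \mathfrak{g}$ and the $\mathfrak{m}$-component of the bracket is just the Lie bracket of $\mathfrak{g}$. Theorem~\ref{nonfull3} then gives the equivalence of the GO property with the existence, for each $X \in \mathfrak{g}$, of some $W \in \mathfrak{g}$ (with $V \in \mathfrak{h} = 0$) such that $([X+W, Y], X) = 0$ for all $Y \in \mathfrak{g}$.

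What remains in the forward direction is to show that $W$ may be chosen in $\mathfrak{k} \subset \mathfrak{g}$ rather than arbitrary in $\mathfrak{g}$. This is handled by unpacking the proof of Theorem~\ref{nonfull3} for the explicit decomposition $\mathfrak{g}_1 = \mathfrak{g} \oplus \mathfrak{k}$: the ideal $\mathfrak{a}$ complementary to $\mathfrak{g}$ is the second factor, identified with $\mathfrak{k}$, while $\mathfrak{h}_1 = \Delta(\mathfrak{k})$ makes $\widetilde{\mathfrak{h}} = \Delta(\mathfrak{k})$. The maps $\sigma_1 : (X,X) \mapsto (0,X)$ and $\sigma_2 : (X,X) \mapsto (X,0)$ show that $\tau = \sigma_2 \circ \sigma_1^{-1}$ is simply the inclusion $\mathfrak{k} \hookrightarrow \mathfrak{g}$, so $\tau(\mathfrak{a}) = \mathfrak{k}$. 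Since $\mathfrak{m} = \mathfrak{g}$, the vector $W$ extracted in that proof is precisely $\tau(T)$ for some $T \in \mathfrak{a}$, and therefore lies in $\mathfrak{k}$.

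For the converse, the stated condition is exactly the hypothesis of Proposition~\ref{nonfull2} with $H = \{e\}$ (the $\Ad(K)$-invariance of $(\cdot,\cdot)$ is built into the form (\ref{innersimpl1})), so that proposition yields the GO property with respect to $G \times K$, hence with respect to the full isometry group. Finally, the equivalent formulation $[A(X), X+W] = 0$ follows as in Proposition~\ref{nonfull4}: using the bi-invariance of $\langle \cdot, \cdot \rangle$, one has
$$
([X+W, Y], X) = \langle [X+W, Y], A(X) \rangle = -\langle Y, [X+W, A(X)] \rangle,
$$
and the non-degeneracy of $\langle \cdot, \cdot \rangle$ converts vanishing for all $Y \in \mathfrak{g}$ into $[A(X), X+W] = 0$. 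The main obstacle is the refinement from $W \in \mathfrak{g}$ (as directly delivered by Theorem~\ref{nonfull3}) to $W \in \mathfrak{k}$; although formal, it requires making the identification $\tau(\mathfrak{a}) = \mathfrak{k}$ explicit using the ambient structure supplied by Ochiai--Takahashi.
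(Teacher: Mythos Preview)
Your proof is correct and follows exactly the route the paper intends: the paper merely writes ``From Theorem~\ref{nonfull3}, we easily get'' after setting up the Ochiai--Takahashi description of the isometry group, and you have filled in precisely those details (specializing to $H=\{e\}$, $\mathfrak m=\mathfrak g$, and tracing the proof of Theorem~\ref{nonfull3} to see that the auxiliary vector lands in $\tau(\mathfrak a)=\mathfrak k$). Your observation that the refinement from $W\in\mathfrak g$ to $W\in\mathfrak k$ requires opening up the proof of Theorem~\ref{nonfull3}, rather than just citing its statement, is exactly the point the paper leaves implicit.
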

\medskip

Consider the following inner product on the Lie algebra $\mathfrak{g}$ of a compact Lie group $G$:

\begin{equation}\label{lieego1}
(\cdot, \cdot)=u_1 \langle \cdot, \cdot \rangle |_{\mathfrak{p}_1}+u_1 \langle \cdot, \cdot \rangle |_{\mathfrak{p}_2}+ \cdots +
u_s \langle \cdot, \cdot \rangle |_{\mathfrak{p}_s},
\end{equation}
where $\mathfrak{g}=\mathfrak{p}_1\oplus \mathfrak{p}_2\oplus \cdots \oplus \mathfrak{p}_s$ is a $B$-orthogonal decomposition and
$u_1,\dots, u_s \in {\mathbb R}^+$ (in fact this means that we have the eigen-decomposition of the metric endomorphism $A$ defined by
$(\cdot, \cdot)=\langle A \cdot, \cdot \rangle$).
Let us find conditions for (\ref{lieego1}) to be a geodesic orbit metric.
We will call a Lie subalgebra $\mathfrak{k}\subset \mathfrak{g}$ {\it adapted} for (\ref{lieego1}), if
$\mathfrak{k}$ is direct sum of its ideals $\mathfrak{k} \cap \mathfrak{p}_i$, $i=1,\dots,s$, (some of these ideals could be trivial)
and the $B$-orthogonal complement to $\mathfrak{k} \cap \mathfrak{p}_i$ in $\mathfrak{p}_i$ is $\ad(\mathfrak{k})$-invariant for every $i=1,\dots,s$.
In particular, this implies that the metric endomorphism $A$ is $\ad(\mathfrak{k})$-equivariant.
It is clear that there is a maximal by inclusion adapted subalgebra among all subalgebras adopted for (\ref{lieego1}).
\smallskip

From  Proposition \ref{nonfull5} we get the following

\begin{prop}\label{nonfull6} The inner product {\rm (\ref{lieego1})} generates a geodesic orbit left-invariant Riemannian metric on $G$ if and only if
there is a maximal by inclusion adapted Lie subalgebra $\mathfrak{k}$ such that
for any $X \in \mathfrak{g}$ there is
$W\in \mathfrak{k}$
such that $[A(X),X+W]=0$,
where $A:\mathfrak{g} \rightarrow \mathfrak{g}$ is a metric endomorphism.
\end{prop}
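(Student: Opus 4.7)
The strategy is to reduce Proposition~\ref{nonfull6} to Proposition~\ref{nonfull5} via a dictionary between closed connected subgroups $K \subseteq G$ for which the metric assumes the form (\ref{innersimpl1}), and Lie subalgebras $\mathfrak{k} \subseteq \mathfrak{g}$ that are adapted for the eigen-decomposition (\ref{lieego1}). The key point is that both notions encode precisely the same structural data.

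The first step is to establish this dictionary: the Lie algebra $\mathfrak{k} = \Lie(K)$ of a closed connected subgroup $K$ is adapted for~(\ref{lieego1}) if and only if the metric takes the form~(\ref{innersimpl1}) with respect to $K$. If $\mathfrak{k}$ is adapted, each ideal $\mathfrak{k}\cap\mathfrak{p}_i$ decomposes into simple and one-dimensional ideals (providing the $\mathfrak{k}_j$ of (\ref{innersimpl1})), while the $\ad(\mathfrak{k})$-invariant $B$-complement of $\mathfrak{k}\cap\mathfrak{p}_i$ in $\mathfrak{p}_i$ decomposes into $\Ad(K)$-irreducible summands (the $\mathfrak{m}_j$); each such summand lies inside a single $\mathfrak{p}_i$, hence is an $A$-eigenspace, so the metric splits as in~(\ref{innersimpl1}). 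Conversely, if the metric has form~(\ref{innersimpl1}) for $K$, then $\mathfrak{k}\cap\mathfrak{p}_i$ is the sum of those $\mathfrak{k}_j$ whose eigenvalue equals $u_i$ (hence an ideal of $\mathfrak{k}$), and its $B$-complement in $\mathfrak{p}_i$ is a sum of certain $\mathfrak{m}_j$ (hence $\ad(\mathfrak{k})$-invariant).

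For the forward direction of Proposition~\ref{nonfull6}, assume the metric~(\ref{lieego1}) is geodesic orbit. Proposition~\ref{nonfull5} produces a closed connected subgroup $K$ whose Lie algebra $\mathfrak{k}$ is adapted by the dictionary and satisfies $[A(X),X+W]=0$ with a suitable $W \in \mathfrak{k}$ for every $X \in \mathfrak{g}$. By finite-dimensionality of $\mathfrak{g}$, any adapted subalgebra can be enlarged to a maximal one $\mathfrak{k}_{\max} \supseteq \mathfrak{k}$; since $W \in \mathfrak{k} \subseteq \mathfrak{k}_{\max}$, the very same $W$ witnesses the condition for $\mathfrak{k}_{\max}$. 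For the converse, given a maximal adapted $\mathfrak{k}$ satisfying the algebraic condition, I would produce a closed connected subgroup $K$ with $\Lie(K)=\mathfrak{k}$ (see the obstacle below); the dictionary puts the metric in form~(\ref{innersimpl1}), and the hypothesis on $W$ is precisely what Proposition~\ref{nonfull5} demands, so the metric is geodesic orbit.

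The main subtlety — and arguably the reason for insisting on maximality — lies in integrating an abstract adapted Lie subalgebra $\mathfrak{k}$ to a closed connected subgroup of the compact group $G$. The analytic subgroup generated by $\mathfrak{k}$ may fail to be closed; I would deal with this by passing to its closure $K'$, noting that $\Lie(K')$ remains adapted (preservation of each $\mathfrak{p}_i$ under $\Ad(g)$ is a closed condition that holds on a dense subset, and the ideal structure propagates to the closure), and then invoking maximality of $\mathfrak{k}$ to force $\Lie(K')=\mathfrak{k}$. Thus $\mathfrak{k}$ itself integrates to the closed connected subgroup $K'$, justifying the appeal to Proposition~\ref{nonfull5}.
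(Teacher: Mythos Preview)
Your proposal is correct and follows essentially the same approach as the paper: both directions reduce to Proposition~\ref{nonfull5} via the same dictionary between adapted subalgebras and the form~(\ref{innersimpl1}), and maximality is invoked for the same purpose---to force closedness of the integrated subgroup by showing that the Lie algebra of the closure is again adapted. The paper's closure argument is marginally more explicit (it records that the commuting ideals $\mathfrak{k}\cap\mathfrak{p}_i$ make $\overline{\exp(\mathfrak{k})}$ the product of the individual closures $\overline{\exp(\mathfrak{k}_i)}$, whence adaptedness of $\overline{\mathfrak{k}}$), but the strategy is identical.
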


\begin{proof}
Suppose that (\ref{lieego1}) generates a geodesic orbit left-invariant Riemannian metric on $G$.
Clear that the inner product (\ref{lieego1}) should coincides with the inner product (\ref{innersimpl1}), where
$G\times K$ is (at least locally) a full isometry group of the metric $\rho$ generated with (\ref{lieego1}).
Further, every $\mathfrak{k}_j$ in (\ref{innersimpl1}) should coincides with some $\mathfrak{k} \cap \mathfrak{p}_i$, $i=1,\dots,s$, in (\ref{lieego1}).
It is clear also that every $\mathfrak{m}_j$ in (\ref{innersimpl1}) should be a subset of one of $\mathfrak{p}_i$, $i=1,\dots,s$, in particular,
the $B$-orthogonal complement to $\mathfrak{k} \cap \mathfrak{p}_i$ in $\mathfrak{p}_i$ is $\ad(\mathfrak{k})$-invariant for every $i=1,\dots,s$.
Hence, $\mathfrak{k}$ is adapted for (\ref{lieego1}). By Proposition \ref{nonfull5}, we get that
any $X \in \mathfrak{m}$ there is
$W\in \mathfrak{k}$ such that $[A(X),X+W]=0$ holds. Obviously, the same is true for any adapted extension of $\mathfrak{k}$, hence,
for some maximal by inclusion adapted Lie subalgebra.

Now, suppose that we  have an adapted Lie subalgebra $\mathfrak{k}$ such that
for any $X \in \mathfrak{m}$ there is
$W\in \mathfrak{k}$
such that $[A(X),X+W]=0$. Let us consider $\mathfrak{m}_i$, the $B$-orthogonal complement to
$\mathfrak{k}_i:= \mathfrak{k} \cap \mathfrak{p}_i$, $i=1,\dots,s$.  By definition, every $\mathfrak{m}_i$ is $\ad(\mathfrak{k})$-invariant.
Therefore, the inner product (\ref{lieego1}) has the form (\ref{innersimpl1}), where $K=\exp(\mathfrak{k}) \subset G$.
Clear that this property saves if we consider more extended adapted Lie subalgebra $\mathfrak{k}$.

Let us take a maximal by inclusion adapted Lie subalgebra $\mathfrak{k}$ such that
for any $X \in \mathfrak{g}$ there is
$W\in \mathfrak{k}$ such that $[A(X),X+W]=0$. First, let us prove that the group $K=\exp(\mathfrak{k})$ is closed in $G$.
Really, if $\mathfrak{m}_i$ is $\ad(\mathfrak{k})$-invariant then it is $\Ad(\exp(\mathfrak{k}))$-invariant
and hence $\Ad(\overline{\exp(\mathfrak{k})})$-invariant. Moreover, $\mathfrak{k}$  is an ideal in $\overline{\mathfrak{k}}:=\Lie (\overline{\exp(\mathfrak{k})})$.
By the same reason, the metric endomorphism $A$ is $\Ad(\overline{\exp(\mathfrak{k})})$-equivariant.
Note also that $[\mathfrak{k}_i, \mathfrak{k}_j]=0$ for $i\neq j$, hence, the closure of $K=\exp(\mathfrak{k})$ in $G$ could be obtained
as a product of the closures
$\exp(\mathfrak{k}_i)$ in $G$.
Hence $\overline{\mathfrak{k}}$ is also adapted for (\ref{lieego1}). Therefore, $\overline{\mathfrak{k}}=\mathfrak{k}$ and $K$ is closed in $G$.
By Proposition \ref{nonfull5}, we get that (\ref{lieego1}) is geodesic orbit.
\end{proof}

\begin{prop}\label{nonfull7} Suppose that the inner product {\rm(\ref{lieego1})} generates a geodesic orbit left-invariant Riemannian metric on $G$,
$\mathfrak{k}_i= \mathfrak{k} \cap \mathfrak{p}_i$, $\mathfrak{m}_i$ is the $B$-orthogonal complement to
$\mathfrak{k}_i$ in $\mathfrak{p}_i$.
Then there is a maximal by inclusion adapted Lie subalgebra $\mathfrak{k}$ such that one of the following assertions hold:

{\rm 1)} There are no more than $1$ indices $i$ such that $\mathfrak{k}_i \neq \mathfrak{p}_i$. In this case
{\rm(\ref{lieego1})} generates a naturally reductive left-invariant Riemannian metric on $G$.

{\rm 2)} $\operatorname{rank}(\mathfrak{k})\geq 2$, and $[\mathfrak{m}_i,\mathfrak{m}_j]\subset \mathfrak{m}_i\oplus \mathfrak{m}_j$ for $i\neq j$.

{\rm 3)} There is only one non-zero $\mathfrak{k}_i= \mathfrak{k} \cap \mathfrak{p}_i$, hence, $\mathfrak{k}_i=\mathfrak{k}$,
moreover, $\operatorname{rank}(\mathfrak{k})= 1$ and ether $[\mathfrak{m}_i,\mathfrak{m}_j]\subset \mathfrak{m}_i$ or
$[\mathfrak{m}_i,\mathfrak{m}_j]\subset \mathfrak{m}_j$ for $i \neq j$.
\end{prop}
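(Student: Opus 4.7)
The plan is to start with any maximal by inclusion adapted Lie subalgebra $\mathfrak{k}$ provided by Proposition \ref{nonfull6}, and to decide which of the three cases applies based on (i) how many of the complements $\mathfrak{m}_i$ are nonzero and (ii) the rank of $\mathfrak{k}$. First I would record two structural preliminaries. Since the decomposition $\mathfrak{k} = \bigoplus_i \mathfrak{k}_i$ is a direct sum of ideals of $\mathfrak{k}$, the summands commute pairwise: $[\mathfrak{k}_i, \mathfrak{k}_l] \subset \mathfrak{k}_i \cap \mathfrak{k}_l \subset \mathfrak{p}_i \cap \mathfrak{p}_l = 0$ for $i \neq l$. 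Combined with bi-invariance of $\langle\cdot,\cdot\rangle=-B$ and the fact that each $\mathfrak{p}_i$ is $\ad(\mathfrak{k})$-invariant (since both $\mathfrak{k}_i$ and $\mathfrak{m}_i$ are), this yields $[\mathfrak{m}_i, \mathfrak{k}] \subset \mathfrak{m}_i$ for every $i$.

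The heart of the argument is to apply the GO characterization of Proposition~\ref{nonfull5}--\ref{nonfull6} to a vector of the special form $X = X_i + X_j$ with $X_i \in \mathfrak{m}_i$, $X_j \in \mathfrak{m}_j$, $i \neq j$. Since $\mathfrak{p}_i$ and $\mathfrak{p}_j$ are distinct eigenspaces of $A$, we have $A(X) = u_i X_i + u_j X_j$, and the equation $[A(X), X+W] = 0$ for some $W \in \mathfrak{k}$ expands to
$$
(u_i - u_j)[X_i, X_j] + u_i [X_i, W] + u_j [X_j, W] = 0.
$$
By the preliminary, $[X_i,W]\in\mathfrak{m}_i$ and $[X_j,W]\in\mathfrak{m}_j$ regardless of $W$. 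Projecting the above equation $B$-orthogonally onto $\mathfrak{k}$ and onto each $\mathfrak{m}_k$ with $k \neq i,j$ kills the $W$-contributions, and the condition $u_i \neq u_j$ (these are distinct eigenvalues of $A$) forces $[X_i, X_j] \in \mathfrak{m}_i \oplus \mathfrak{m}_j$. So the weak bracket condition of Case 2 holds automatically as soon as two of the $\mathfrak{m}_i$'s are nontrivial.

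For Case 1, where at most one index $i_0$ has $\mathfrak{m}_{i_0} \neq 0$, I would set $\mathfrak{h} := \bigoplus_{j \neq i_0} \mathfrak{p}_j = \bigoplus_{j \neq i_0} \mathfrak{k}_j$. Because the $\mathfrak{k}_j$'s commute pairwise and each is either simple or one-dimensional, $\mathfrak{h}$ is a subalgebra whose decomposition into center and simple ideals matches the block-diagonal decomposition of $A|_{\mathfrak{h}}$, while $A|_{\mathfrak{p}_{i_0}} = u_{i_0}\,\Id$. Thus the metric is exactly of the form \eqref{natural}, and Proposition~\ref{natredgr} gives natural reductivity. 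Case 2 is then just the combination of the weak bracket condition with the hypothesis $\rk(\mathfrak{k}) \geq 2$.

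The main obstacle is Case 3, where $\rk(\mathfrak{k}) = 1$ (which automatically forces only one nonzero $\mathfrak{k}_l$, since the rank one compact algebras $\mathfrak{u}(1)$ and $\mathfrak{su}(2)$ are indecomposable) and one must upgrade $[\mathfrak{m}_i,\mathfrak{m}_j]\subset\mathfrak{m}_i\oplus\mathfrak{m}_j$ to the alternative $[\mathfrak{m}_i,\mathfrak{m}_j]\subset\mathfrak{m}_i$ or $[\mathfrak{m}_i,\mathfrak{m}_j]\subset\mathfrak{m}_j$. The strategy is to revisit the two constraints
$$
[X_i, W] = \tfrac{u_j - u_i}{u_i}\, P_i, \qquad [X_j, W] = \tfrac{u_i - u_j}{u_j}\, P_j,
$$
where $P_i,P_j$ are the components of $[X_i,X_j]$ in $\mathfrak{m}_i,\mathfrak{m}_j$, and to exploit that $W$ ranges over the very small space $\mathfrak{k}$. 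I would argue that if for some pair $(i,j)$ both components $P_i$ and $P_j$ were generically nonzero, then one could either construct an additional $\ad(\mathfrak{k})$-invariant element of $\mathfrak{g}$ commuting with $\mathfrak{k}$, contradicting maximality of the adapted subalgebra $\mathfrak{k}$, or exhibit a larger adapted subalgebra of rank at least $2$ placing us in Case 2 instead. Making this dichotomy rigorous — in particular, verifying that maximality plus the GO identity actually excludes the rank-one-but-weak-bracket-only scenario — is the delicate step of the proof.
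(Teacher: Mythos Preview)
Your setup and the treatment of Cases 1 and 2 are correct and close to the paper's; in fact your derivation of the weak bracket relation $[\mathfrak m_i,\mathfrak m_j]\subset\mathfrak m_i\oplus\mathfrak m_j$ from the test vector $X=X_i+X_j$ is a bit cleaner than the paper's, which carries along an extra $\mathfrak k$-component at this stage.

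The genuine gap is in Case 3. Your proposed route---using maximality of the adapted $\mathfrak k$ to manufacture an extra commuting element and thereby push the situation into Case 2---is vague, and I do not see how to make it work: maximality among adapted subalgebras is a condition about preserving the eigenspace decomposition, and nothing in your two displayed constraints on $W$ produces a candidate element that is simultaneously $\ad(\mathfrak k)$-invariant and lives in the right $\mathfrak p_i$'s. The paper's argument is quite different and hinges precisely on enriching the test vector with a $\mathfrak k$-component. One applies the GO condition to $X=V+Y_1+Y_2$ with $0\neq V\in\mathfrak k$. The $\mathfrak k$-part of $[A(X),X+W]=0$ forces $[V,W]=0$; since $\operatorname{rank}(\mathfrak k)=1$, this yields $W=cV$ for some scalar $c$, collapsing the two equations you wrote to a single one-parameter relation
\[
(u_2-u_1)[Y_1,Y_2]=\bigl(u_j-u_1(1+c)\bigr)[V,Y_1]+\bigl(u_j-u_2(1+c)\bigr)[V,Y_2].
\]
Now repeat with $2V$ in place of $V$ (same $Y_1,Y_2$), obtaining a second such relation with a new constant $\tilde c$. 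Subtracting the two eliminates $[Y_1,Y_2]$ and leaves a linear relation between $[V,Y_1]\in\mathfrak m_1$ and $[V,Y_2]\in\mathfrak m_2$ whose coefficients cannot both vanish (because $u_1\neq u_2$). Hence one of $[V,Y_1],[V,Y_2]$ vanishes, and plugging back gives $[Y_1,Y_2]\in\mathfrak m_1$ or $[Y_1,Y_2]\in\mathfrak m_2$ for that particular pair; a continuity/basis argument then globalizes this to $[\mathfrak m_1,\mathfrak m_2]\subset\mathfrak m_1$ or $\subset\mathfrak m_2$. The scaling trick $V\mapsto 2V$, together with the rank-one consequence $W\parallel V$, is the missing idea in your proposal.
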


\begin{proof} Let us fix a maximal by inclusion adapted Lie subalgebra $\mathfrak{k}$ in $\mathfrak{g}$ such that for any $X \in \mathfrak{g}$ there is
$W\in \mathfrak{k}$ with property $[A(X),X+W]=0$, where $A$ is the metric endomorphism (such a subalgebra does exist by Proposition \ref{nonfull6}).

Note that if $\mathfrak{k}_i= \mathfrak{k} \cap \mathfrak{p}_i = \mathfrak{p}_i$ for all $i$, then the inner product (\ref{lieego1}) is bi-invariant and
it generates a normal homogeneous  (in particular, naturally reductive) metric on $G$. If
$\mathfrak{k}_i= \mathfrak{k} \cap \mathfrak{p}_i \neq \mathfrak{p}_i$ only for one $i$, then (\ref{lieego1}) generate a naturally reductive metrics on $G$ by
Proposition \ref{natredgr}.

Now we suppose that $\mathfrak{k}_i= \mathfrak{k} \cap \mathfrak{p}_i \neq \mathfrak{p}_i$ for some two indices,
i.~e. $\mathfrak{m}_i$ is non-zero for some two indices. Without loss of generality,
we may think that these are $i=1$ and $i=2$. Let us take some  $Y_i$, $i=1,2$, in $\mathfrak{m}_i$  and any non-trivial
$V\in \mathfrak{k}_j= \mathfrak{k} \cap \mathfrak{p}_j$ for some index $j$.
Note that $[\mathfrak{k},\mathfrak{m}_i]\subset \mathfrak{m}_i$ and $[\mathfrak{m}_1,\mathfrak{m}_2]\subset \oplus_{i=1}^s\mathfrak{m}_i$
(since $\mathfrak{m}_i$ are $\ad(\mathfrak{k})$-invariant and $B(\mathfrak{m}_1,\mathfrak{m}_2)=0$, then
$B([U_1,U_2],W)=-B(U_2,[U_1,W])=0$ for $U_i \in \mathfrak{m}_i$ and $W \in \mathfrak{k}$). It is easy to see that $A(V+Y_1+Y_2)=u_j X+u_1 Y_1+u_2 Y_2$.
We know that there is $W\in \mathfrak{k}$ such that $[A(V+Y_1+Y_2), V+Y_1+Y_2+W]=0$, which is equivalent to
$$
(u_j-u_1)[V,Y_1]-u_1[W,Y_1]+(u_j-u_2)[V,Y_2]-u_1[W,Y_2]+(u_1-u_2)[Y_1,Y_2]+u_j[V,W]=0,
$$
therefore, $[V,W]=0$ and
$$
(u_2-u_1)[Y_1,Y_2]=[(u_j-u_1)V-u_1W,Y_1]+[(u_j-u_2)V -u_1 W,Y_2]\in \mathfrak{m}_1 \oplus \mathfrak{m}_2.
$$
Note that $[\mathfrak{m}_i, \mathfrak{m}_j]\in \mathfrak{m}_i \oplus \mathfrak{m}_j$ for every different $i \neq j$ by the same reason.

Now, if $V$ and $W$ are lineal independent, then $\operatorname{rank}(\mathfrak{k})\geq 2$. Let us suppose that
$\operatorname{rank}(\mathfrak{k})=1$ (this obviously imply that there is only one non-zero $\mathfrak{k}_i$),
then $W=c V$ for some real number $c$,
and the above equality is simplified  up to
$$
(u_2-u_1)[Y_1,Y_2]=[(u_j-u_1(1+c))V,Y_1]+[(u_j-u_2(1+c))V,Y_2].
$$
If we consider the vector $2V$ instead $V$ in the above equalities, then there is real number $\tilde{c}$ such that $W=\tilde{c} (2V)$. Hence we get
(with the same $Y_1$ and $Y_2$)
$$
(u_2-u_1)[Y_1,Y_2]=[(u_j-u_1(1+\tilde{c}))2V,Y_1]+[(u_j-u_2(1+\tilde{c}))2V,Y_2].
$$
From the last two equality we get
$$
(u_j-u_1(1+2\tilde{c}-c))[V,Y_1]+(u_j-u_2(1+2\tilde{c}-c))[V,Y_2]=0.
$$
Since $u_1\neq u_2$ then $u_j-u_1(1+2\tilde{c}-c)$ and $u_j-u_2(1+2\tilde{c}-c)$ could not be zero simultaneously.
Suppose without loss of generality that $u_j-u_2(1+2\tilde{c}-c)\neq 0$, then $[V,Y_2]=0$ and $[Y_1,Y_2]\in \mathfrak{m}_1$.
The same arguments show that for every vectors $Z_1 \in \mathfrak{m}_1$ and $Z_2 \in \mathfrak{m}_2$,
we have either $[Z_1,Z_2]\in \mathfrak{m}_1$ or $[Z_1,Z_2]\in \mathfrak{m}_2$.
If $[Y_1,Y_2]\neq 0$ then for every vectors $Z_1 \in \mathfrak{m}_1$ and $Z_2 \in \mathfrak{m}_2$ sufficiently closed  to $Y_1$ and $Y_2$ respectively,
we get $[Z_1,Z_2]\in \mathfrak{m}_1$. Since we may constitute bases of $\mathfrak{m}_i$, $i=1,2$, from vectors of these kinds, we see that
$[\mathfrak{m}_1,\mathfrak{m}_2]\subset \mathfrak{m}_1$.
\end{proof}

\section{A left-invariant Einstein metric on the group $G_2$ that \\ is not geodesic orbit}

let us recall some information about root systems of a compact simple
Lie algebra $(\mathfrak{g},\langle \cdot,\cdot \rangle=-B)$ with the
Killing form $B$, which can be find in books \cite{Burb4, Hel}.

Let us fix a Cartan subalgebra  $\mathfrak{t}$ (that is maximal abelian subalgebra) of Lie algebra~$\mathfrak{g}$.
There is a set $\Delta$ (\textit{root system}) of (non-zero) real-valued linear form $\alpha \in \mathfrak{t}^{\ast}$ on the Cartan subalgebra
$\mathfrak{t}$, that are called \textit{roots}. Let us consider some positive root system $\Delta^+ \subset \Delta$. Recall that for any
$\alpha \in \Delta$ exactly one of the roots $\pm \alpha$ is positive (we denote it by $|\alpha|$).
The Lie algebra $\mathfrak{g}$ admits a direct
$\langle \cdot,\cdot \rangle$-orthogonal decomposition
\begin{equation}\label{rsd}
\mathfrak{g}=\mathfrak{t}\oplus \bigoplus_{\alpha \in \Delta^+} \mathfrak{v}_{\alpha}
\end{equation}
into vector subspaces, where
each subspace $\mathfrak{v}_{\alpha}$ is  2-dimensional and
$\ad(\mathfrak{t})$-invariant. Using the
restriction (of non-degenerate) inner product $\langle \cdot,\cdot \rangle$ to
$\mathfrak{t}$, we will naturally identify  $\alpha$ with some vector
in $\mathfrak{t}$. Note that $[\mathfrak{v}_{\alpha}, \mathfrak{v}_{\alpha}]$ is one-dimensional subalgebra in~$\mathfrak{t}$ spanned on
the root $\alpha$,
and $[\mathfrak{v}_{\alpha}, \mathfrak{v}_{\alpha}]\oplus \mathfrak{v}_{\alpha}$ is a Lie algebra isomorphic to $su(2)$.
The vector subspaces $\mathfrak{v}_{\alpha}$, $\alpha \in \Delta^+$, admit bases
$\{U_{\alpha},V_{\alpha}\}$, such that $\langle U_{\alpha},U_{\alpha}\rangle=\langle V_{\alpha},V_{\alpha} \rangle= 1$,
$\langle U_{\alpha},V_{\alpha}\rangle=0$
and
\begin{equation}\label{N}
[H,U_{\alpha}]=\langle \alpha,H \rangle V_{\alpha},\quad
[H,V_{\alpha}]=-\langle \alpha,H \rangle U_{\alpha}, \quad \forall H\in \mathfrak{t}, \quad
[U_{\alpha},V_{\alpha}]={\alpha}.
\end{equation}

Note also, that $[\mathfrak{v}_{\alpha},\mathfrak{v}_{\beta}]=\mathfrak{v}_{\alpha+\beta}+\mathfrak{v}_{|\alpha-\beta|}$,
assuming  $\mathfrak{v}_{\gamma}:=\{0\}$ for $\gamma \notin \Delta^+$.
\medskip

For a positive root system $\Delta^+$ the (closed) Weyl chamber is defined by the equality
\begin{equation}\label{carcham}
C=C(\Delta^+):=\{H\in\mathfrak{t}\,|\,\langle \alpha,H\rangle \geq 0\,\, \forall \alpha\in \Delta^+\}.
\end{equation}
Recall some important properties of the Weyl group $W=W(\mathfrak{t})$ of the Lie algebra $\mathfrak{g}$, that acts on the Cartan subalgebra
$\mathfrak{t}$.

(i) For every root $\alpha \in \Delta \subset \mathfrak{t}$ the Weyl
group $W$ contains the orthogonal reflection $\varphi_{\alpha}$ in
the plane $P_{\alpha}$, which is orthogonal to the root $\alpha$
with respect to $\langle \cdot,\cdot \rangle$. It is easy to see that $\varphi_{\alpha}(H)=H-2\frac{\langle H,\alpha\rangle}
{\langle \alpha,\alpha\rangle}\alpha$, $H\in \mathfrak{t}$.

(ii) Reflections from (i) generate $W$.

(iii) The root system $\Delta$ is invariant under the action of the Weyl group $W$.

(iv) $W$ acts irreducible on $\mathfrak{t}$ and simply transitively on the set of positive root systems.
For any $H\in \mathfrak{t}$, there is $w\in W$, such that $w(H)\in  C(\Delta^+)$.

(v)  For any $X\in \mathfrak{g}$, there is an inner automorphism $\psi$ of $\mathfrak{g}$ such that $\psi(X)\!\in\!\mathfrak{t}$.
For any $w\in W$, there is an inner automorphism $\eta$ of $\mathfrak{g}$, such that $\mathfrak{t}$ is stable under $\eta$, and the restriction of $\eta$
to $\mathfrak{t}$ coincides with $w$.

(vi) The Weyl group $W$ acts transitively on the set of positive roots of  fixed length.
\smallskip

\medskip

Let us describe all
subalgebras of maximal rank in $\mathfrak{g}$ up to a conjugation with respect to
$\Ad(g)$, $g\in G$, such that
$\Ad(g)(\mathfrak{t})=\mathfrak{t}$. Any such Lie subalgebra
$\mathfrak{h}$ is defined by a class of pairwise $W$-isomorphic
closed symmetric root subsystems $A$ of $\Delta$, not equal to
$\Delta$. By definition, $A \subset \Delta$ is \textit{closed}, if
$\alpha,\beta \in A$ and $\alpha \pm \beta \in \Delta$ imply $\alpha
\pm \beta \in A$, and \textit{symmetric}, if $-\alpha \in A$
together with $\alpha \in A$. Then
\begin{equation}
\label{h} \mathfrak{h}=\mathfrak{t}\oplus  \bigoplus_{\alpha \in A \cap \Delta^+} \mathfrak{v}_{\alpha}.
\end{equation}

\medskip

Now, let us give a description of the root system $\Delta_{G_2}$ of
the Lie algebra $g_2$. There are two simple roots $\alpha,\beta \in \Delta_{G_2}$
such that $\angle(\alpha,\beta)=\frac{5\pi}{6}$ and
$|\alpha|=\sqrt{3}|\beta|$, where $|X|=\sqrt{\langle X,X \rangle}$. Then
$$
\Delta_{G_2}=\{\pm\alpha, \pm\beta, \pm(\alpha+\beta), \pm(\alpha+2\beta),
\pm(\alpha+3\beta), \pm(2\alpha+3\beta)\}.
$$
Moreover, the above vectors with plus signs constitute a positive root system $\Delta_{G_2}^+$, whereas $\pm\alpha,
\pm(\alpha+3\beta), \pm(2\alpha+3\beta)$ are all long roots.
One can easily see that all non $W$-isomorphic closed symmetric root
subsystems of $\Delta_{G_2}$, not equal to $\Delta_{G_2},$  are
$\emptyset,$ $\{\pm\alpha\},$ $\{\pm\beta\},$
$\{\pm\beta,\pm(2\alpha+3\beta)\},$ $\{\pm\alpha,\pm(\alpha+3\beta),
\pm(2\alpha+3\beta)\}.$

The first three cases give us the following
(generalized) flag manifolds respectively: $G_2/T^2,$ $G_2/SU(2)SO(2),$ and
$G_2/A_{1,3}SO(2),$ where $A_{1,3}$ is a Lie group with Lie
subalgebra of the type $A_1$ of index 3, see \cite{On}.

The last two closed symmetric root subsystems are maximal, so they
correspond to maximal Lie subalgebras in $g_2,$ which are
respectively isomorphic to $su(2)\oplus su(2)$ and $su(3)$ with the
corresponding compact connected Lie subgroups $SO(4)$ and $SU(3)$
and homogeneous spaces $G_2/SO(4)$ and $G_2/SU(3)=S^6$, compare with
\cite{On}. Note that $G_2/SO(4)$ is irreducible symmetric space and $G_2/SU(3)$ is non-symmetric irreducible, see
\cite{Bes}.
Note also that any geodesic orbit
Riemannian homogeneous manifold $(G_2/H,\mu)$
of positive Euler characteristic is either $G_2$-normal or $SO(7)$-normal, see Proposition 23 in \cite{BerNik}.

\medskip

Let us define  special metrics of the type (\ref{lieego1}) for the Lie algebra $g_2$ with $s=5$.
Let us put
\begin{eqnarray*}
\mathfrak{p}_1&=&[ \mathfrak{v}_{\alpha+2\beta}, \mathfrak{v}_{\alpha+2\beta}]\subset \mathfrak{t},\\
\mathfrak{p}_2&=&[ \mathfrak{v}_{\alpha}, \mathfrak{v}_{\alpha}] \oplus  \mathfrak{v}_{\alpha},\\
\mathfrak{p}_3&=& \mathfrak{v}_{\alpha+\beta} \oplus  \mathfrak{v}_{\beta},\\
\mathfrak{p}_4&=& \mathfrak{v}_{\alpha+2\beta},\\
\mathfrak{p}_5&=& \mathfrak{v}_{2\alpha+3\beta} \oplus  \mathfrak{v}_{\alpha+3\beta}.
\end{eqnarray*}
Clear that
$\mathfrak{t}=[ \mathfrak{v}_{\alpha+2\beta}, \mathfrak{v}_{\alpha+2\beta}]\oplus[ \mathfrak{v}_{\alpha}, \mathfrak{v}_{\alpha}]$,
$g_2=\mathfrak{p}_1\oplus\mathfrak{p}_2\oplus\mathfrak{p}_3\oplus\mathfrak{p}_4\oplus\mathfrak{p}_5$, and, moreover,
all modules $\mathfrak{p}_i$, $i=1,\dots,5$, are pairwise $\langle \cdot, \cdot \rangle$-orthogonal.
It is easy to see that
$\mathfrak{p}_1\oplus\mathfrak{p}_2\oplus\mathfrak{p}_4=su(2)\oplus su(2)$ and
$\mathfrak{p}_1\oplus\mathfrak{p}_2\oplus\mathfrak{p}_5=su(3)$.  Moreover, $[\mathfrak{p}_3,\mathfrak{p}_5]\subset\mathfrak{p}_4$,
$[\mathfrak{p}_4,\mathfrak{p}_5]\subset\mathfrak{p}_3$, $[\mathfrak{p}_3,\mathfrak{p}_4]\subset\mathfrak{p}_3 \oplus \mathfrak{p}_5$
with non-zero $[\mathfrak{p}_3,\mathfrak{p}_5]$ and $[\mathfrak{p}_4,\mathfrak{p}_5]$, and $[\mathfrak{p}_3,\mathfrak{p}_4]\not\subset\mathfrak{p}_3$.

Now, let us consider the metrics

\begin{equation}\label{lieego2}
(\cdot, \cdot)=u_1 \langle \cdot, \cdot \rangle |_{\mathfrak{p}_1}+u_2 \langle \cdot, \cdot \rangle |_{\mathfrak{p}_2}+
u_3 \langle \cdot, \cdot \rangle |_{\mathfrak{p}_1}+u_4 \langle \cdot, \cdot \rangle |_{\mathfrak{p}_2}+
u_5 \langle \cdot, \cdot \rangle |_{\mathfrak{p}_5}
\end{equation}
with $u_i>0$, $i=1,\dots,5$.

\begin{prop}\label{nonfull8} Let us consider the group $G_2$ supplied with the left-invariant Riemannian metric $\rho$, generated with
the inner product {\rm(\ref{lieego2})} with pairwise distinct $u_i>0$, $i=1,\dots,5$. Then the Riemannian manifold $(G_2, \rho)$ is not geodesic orbit.
\end{prop}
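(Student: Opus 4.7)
The plan is to apply the trichotomy of Proposition~\ref{nonfull7} and show that none of its three alternatives is compatible with the structure of the decomposition $g_2=\mathfrak{p}_1\oplus\mathfrak{p}_2\oplus\mathfrak{p}_3\oplus\mathfrak{p}_4\oplus\mathfrak{p}_5$, once that trichotomy is combined with a sharp restriction on the adapted subalgebra $\mathfrak{k}$. I would argue by contradiction: assuming $(G_2,\rho)$ is geodesic orbit, I first fix a maximal by inclusion adapted subalgebra $\mathfrak{k}\subset g_2$ for which, for every $X\in g_2$, some $W\in\mathfrak{k}$ satisfies $[A(X),X+W]=0$, as supplied by Proposition~\ref{nonfull7}.

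The key observation to establish is that adaptedness forces the metric endomorphism $A$ to be $\ad(\mathfrak{k})$-equivariant, so $\ad(\mathfrak{k})$ preserves each eigenspace $\mathfrak{p}_i$. Since $\mathfrak{p}_1=\mathbb{R}(\alpha+2\beta)$ is one-dimensional and $\ad(X)$ is skew-symmetric with respect to $\langle\cdot,\cdot\rangle$, its restriction to $\mathfrak{p}_1$ must vanish, and therefore $\mathfrak{k}\subset\mathfrak{c}_{g_2}(\alpha+2\beta)$. I would then perform a short root-system check (verifying that among the positive roots of $g_2$ only $\alpha$ is orthogonal to $\alpha+2\beta$) to identify this centralizer with $\mathfrak{t}\oplus\mathfrak{v}_\alpha=\mathfrak{p}_1\oplus\mathfrak{p}_2$. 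It follows that $\mathfrak{k}_3=\mathfrak{k}_4=\mathfrak{k}_5=0$, and hence $\mathfrak{m}_3=\mathfrak{p}_3$, $\mathfrak{m}_4=\mathfrak{p}_4$, $\mathfrak{m}_5=\mathfrak{p}_5$.

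With this restriction in hand I would dispatch the three cases of Proposition~\ref{nonfull7} as follows. Case~1 requires $\mathfrak{k}_i\neq\mathfrak{p}_i$ for at most one index, which contradicts the three such indices just produced. Cases~2 and~3 are defeated by the bracket relation recorded just before the statement of the proposition: $[\mathfrak{p}_3,\mathfrak{p}_5]\subset\mathfrak{p}_4$ is nonzero, so $[\mathfrak{m}_3,\mathfrak{m}_5]$ has a nontrivial component in $\mathfrak{m}_4$. In Case~2 this contradicts the required inclusion $[\mathfrak{m}_3,\mathfrak{m}_5]\subset\mathfrak{m}_3\oplus\mathfrak{m}_5$; in Case~3 it prevents $[\mathfrak{m}_3,\mathfrak{m}_5]$ from lying in either $\mathfrak{m}_3$ or $\mathfrak{m}_5$. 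This exhausts the trichotomy and yields the desired contradiction.

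The main obstacle I anticipate is the structural lemma $\mathfrak{k}\subset\mathfrak{p}_1\oplus\mathfrak{p}_2$; once it is in place, the three cases are dispatched by quoting the bracket relations among $\mathfrak{p}_3,\mathfrak{p}_4,\mathfrak{p}_5$ already computed in the text. The decisive interplay is between the presence of the one-dimensional eigenspace $\mathfrak{p}_1$, which forces $\mathfrak{k}$ into a very small centralizer, and the genuinely off-diagonal bracket structure of $\mathfrak{p}_3,\mathfrak{p}_4,\mathfrak{p}_5$.
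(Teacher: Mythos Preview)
Your proposal is correct and, in fact, somewhat cleaner than the paper's own argument. Both proofs invoke Proposition~\ref{nonfull7} and rule out the three alternatives, but the paper treats the cases more ad hoc: for Case~1 it observes that $\mathfrak{k}$ would have to contain at least four of the $\mathfrak{p}_i$ and no such subalgebra of $g_2$ exists; for Case~2 it uses $\operatorname{rank}(g_2)=2$ to bound the number of nonzero $\mathfrak{k}_i$, then argues that the bracket relations among $\mathfrak{p}_3,\mathfrak{p}_4,\mathfrak{p}_5$ force at least two of $\mathfrak{k}_3,\mathfrak{k}_4,\mathfrak{k}_5$ to be nonzero, whence $\mathfrak{p}_1\subset\mathfrak{m}$, and finally derives a contradiction from $[\mathfrak{p}_1,\mathfrak{k}_i]\neq 0$ for $i\in\{3,4,5\}$. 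Your structural lemma $\mathfrak{k}\subset\mathfrak{c}_{g_2}(\alpha+2\beta)=\mathfrak{p}_1\oplus\mathfrak{p}_2$, deduced once from the one-dimensionality of the eigenspace $\mathfrak{p}_1$ and the skew-symmetry of $\ad$, replaces all of this: it immediately gives $\mathfrak{m}_i=\mathfrak{p}_i$ for $i=3,4,5$, after which a single bracket relation $0\neq[\mathfrak{p}_3,\mathfrak{p}_5]\subset\mathfrak{p}_4$ kills Cases~2 and~3 simultaneously, and Case~1 is trivial. The paper's route avoids computing the centralizer but pays for it with a longer case analysis; your route front-loads one easy root computation and then finishes uniformly.
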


\begin{proof} Our proof is based on Proposition~\ref{nonfull7}. Suppose that $(G_2, \rho)$ is a geodesic orbit Riemannian manifold,
then at least one of the three assertions 1), 2), or 3) in Proposition~\ref{nonfull7} is fulfilled.
Let us consider the corresponding Lie subalgebra $\mathfrak{k}\subset g_2=\mathfrak{g}$ as in Proposition~\ref{nonfull7} and
note that $\mathfrak{m}=\oplus_{i=1}^5 \mathfrak{m}_i$ is $\ad(\mathfrak{k})$-invariant (see the notation in Proposition~\ref{nonfull7}).

Let us suppose that 1) is fulfilled, i.~e. there is at most $1$ index $i$ such that $\mathfrak{k}_i:= \mathfrak{k} \cap \mathfrak{p}_i \neq \mathfrak{p}_i$.
This means that the Lie subalgebra $\mathfrak{k}$ should contain at least four of modules $\mathfrak{p}_i$.
Clear, that there is no such subalgebra in $g_2$.

Now, let us suppose that 2) is fulfilled, i.~e.
$\operatorname{rank}(\mathfrak{k})\geq 2$, and $[\mathfrak{m}_i,\mathfrak{m}_j]\subset \mathfrak{m}_i\oplus \mathfrak{m}_j$ for $i\neq j$.
Since $\operatorname{rank}(g_2)=2$, then $\operatorname{rank}(\mathfrak{k})= 2$ in our case. This mean that at most two subalgebras
$\mathfrak{k}_i=\mathfrak{k} \cap \mathfrak{p}_i$ are non-zero.
Since
$[\mathfrak{p}_3,\mathfrak{p}_5]\subset\mathfrak{p}_4$,
$[\mathfrak{p}_4,\mathfrak{p}_5]\subset\mathfrak{p}_3$, and $[\mathfrak{p}_3,\mathfrak{p}_4]\subset\mathfrak{p}_3 \oplus \mathfrak{p}_5$, we get that
$\mathfrak{k}_i=\mathfrak{k}\cap \mathfrak{p}_i \neq 0$ at least for two indices from the set $\{3,4,5\}$
(if $\mathfrak{k}_j=\mathfrak{k}_i=0$ for $i,j \in \{3,4,5\}$, then  the inclusion
$[\mathfrak{p}_i,\mathfrak{p}_j]=[\mathfrak{m}_i,\mathfrak{m}_j]\subset \mathfrak{m}_i\oplus \mathfrak{m}_j$ is not valid).
Therefore, $\mathfrak{p}_1 = \mathfrak{m}_1 \subset \mathfrak{m}$ (otherwise $\operatorname{rank}(\mathfrak{k})> 2$).
Now, we get contradiction with $[\mathfrak{k},\mathfrak{m}]\subset \mathfrak{m}$ since $[\mathfrak{p}_1, \mathfrak{p}_i]\subset \mathfrak{p}_i$ for all $i$
and $[\mathfrak{p}_1, \mathfrak{k}_i]\neq 0$ if $\mathfrak{k}_i$ is non-trivial, $i=3,4,5$ (the latter is true due to (\ref{N}) and the fact that the root
$\alpha+2\beta$ is not orthogonal to each of the roots
${\alpha+\beta}$, ${\beta}$, ${\alpha+2\beta}$, ${2\alpha+3\beta}$, and ${\alpha+3\beta}$).

Finally, let us suppose that 3) is fulfilled, i.~e. there is only one non-zero
$\mathfrak{k}_i= \mathfrak{k} \cap \mathfrak{p}_i=\mathfrak{k}$,
$\operatorname{rank}(\mathfrak{k})= 1$ and
ether $[\mathfrak{m}_i,\mathfrak{m}_j]\subset \mathfrak{m}_i$ or
$[\mathfrak{m}_i,\mathfrak{m}_j]\subset \mathfrak{m}_j$ for $i \neq j$. But this contradicts to the relations
$[\mathfrak{p}_3,\mathfrak{p}_5]\subset\mathfrak{p}_4$,
$[\mathfrak{p}_4,\mathfrak{p}_5]\subset\mathfrak{p}_3$, $[\mathfrak{p}_3,\mathfrak{p}_4]\subset\mathfrak{p}_3 \oplus \mathfrak{p}_5$
and to the fact that at least two of the modules
$\mathfrak{p}_3,\mathfrak{p}_4,\mathfrak{p}_5$ do not intersect with  $\mathfrak{k}$.
\end{proof}

\begin{theorem}\label{einstnongo}
There is a left-invariant Riemannian  metric $\rho$ on the compact simple Lie group $G_2$ such that $(G_2,\rho)$ is Einstein but
is not  a geodesic orbit Riemannian manifold.
\end{theorem}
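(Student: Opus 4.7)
The plan is to exhibit a concrete left-invariant Einstein metric $\rho$ on $G_2$ of the form (\ref{lieego2}) with the five coefficients $u_1,\dots,u_5$ pairwise distinct, and then invoke Proposition~\ref{nonfull8} to deduce that $(G_2,\rho)$ is not a geodesic orbit Riemannian manifold. The Einstein metric itself will be taken from the paper \cite{CS} of I.~Chrysikos and Y.~Sakane, where such a metric was constructed and shown to be not naturally reductive; here we only need its existence and its explicit (numerical) data.

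First, I would identify the Chrysikos--Sakane Einstein metric with a metric of the form (\ref{lieego2}). Their ansatz is a multi-parameter left-invariant metric on $G_2$ adapted to the intermediate subgroups $SO(4)$ and $SU(3)$. Since by construction $\mathfrak{p}_1 \oplus \mathfrak{p}_2 \oplus \mathfrak{p}_4 = su(2)\oplus su(2)$ and $\mathfrak{p}_1 \oplus \mathfrak{p}_2 \oplus \mathfrak{p}_5 = su(3)$, the five-parameter family (\ref{lieego2}) is exactly the space of $\Ad(T^2)$-invariant inner products on $g_2$ that are simultaneously of ``bi-invariant type'' on both intermediate subgroups, and this is precisely the space in which \cite{CS} search for Einstein metrics. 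The identification of their parameters with $u_1,\dots,u_5$ is then a routine matching of coefficients, using the orthogonal decomposition $g_2=\mathfrak{p}_1\oplus\mathfrak{p}_2\oplus\mathfrak{p}_3\oplus\mathfrak{p}_4\oplus\mathfrak{p}_5$ introduced in the preceding section.

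Next, I would select the non naturally reductive Einstein solution from \cite{CS} and verify, using the explicit (if necessary, numerical) values given there, that all five coefficients $u_i$ are pairwise distinct. Once this is established, Proposition~\ref{nonfull8} applies verbatim and delivers the conclusion that $(G_2,\rho)$ is not geodesic orbit. Since the existence of the Einstein metric and the Einstein property itself are provided by \cite{CS}, no further differential-geometric computation is needed on our side.

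The main anticipated obstacle is precisely the pairwise distinctness of $u_1,\dots,u_5$ at the chosen Einstein solution. If two of the $u_i$ accidentally coincided, Proposition~\ref{nonfull8} would not literally apply and one would have to revisit its proof, examining which of the three alternatives of Proposition~\ref{nonfull7} could then be realized for that degenerate pattern of eigenvalues. Fortunately the non natural reductivity established in \cite{CS} already rules out the most symmetric patterns (in view of Proposition~\ref{natredgr}), so the degeneracies still to be excluded are few, and a direct inspection of the numerical Einstein data of \cite{CS} is expected to confirm genuine pairwise distinctness and thus complete the proof.
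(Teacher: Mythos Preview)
Your proposal is correct and follows essentially the same approach as the paper: take the non naturally reductive Einstein metric on $G_2$ found by Chrysikos and Sakane in \cite{CS}, verify that its five coefficients $u_1,\dots,u_5$ are pairwise distinct (the paper simply quotes the numerical values), and apply Proposition~\ref{nonfull8}. The paper's proof is in fact even shorter than your outline, since it does not dwell on the identification of parameters or on the hypothetical degenerate case.
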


\begin{proof} In the paper \cite{CS}, I.~Chrysikos and Y.~Sakane classified left-invariant Einstein metrics  of the type (\ref{lieego2}) on the Lie group $G_2$,
see Section 4.4 in \cite{CS}. There are exactly 3 such metrics, up to a homothety.
Two of them, generated with the parameters $(u_1,u_2,u_3,u_4,u_5)=(1,1,1,1,1)$ and $(u_1,u_2,u_3,u_4,u_5)=(1,1,11/9,11/9,1)$,
are naturally reductive with respect to suitable isometry groups. The most interesting is the third one with the following parameters:
$$
u_1 \approx 1.0851961, \ u_2 \approx 0.69929486,  \ u_3 \approx 0.93245951, \ u_4 \approx 1.0225069 , \ u_5=1.
$$
Note that $u_3$ is a root of a polynomial with integer coefficient of degree 39, moreover,
$u_1$,  $u_2$, and $u_4$ are given by polynomials of  degree 40  of $u_3$ with coefficients of rational numbers
(our notations are different from the notations of \cite{CS}). In \cite{CS}, it is proved that the corresponding Riemannian metric $\rho$ is not naturally reductive.
From our Proposition~\ref{nonfull8}, we see that the corresponding Riemannian manifold $(G_2, \rho)$ is not even geodesic orbit.
\end{proof}

\medskip
We propose the following two questions, closely related with the obtained results.

\begin{quest}\label{que1}
What is the minimal dimension of compact Lie groups, admitting  left-invariant
Einstein Riemannian metrics that are not geodesic orbit?
\end{quest}

\begin{quest}\label{que2}
Which simple compact Lie groups admit  left-invariant
Einstein Riemannian metrics that are not geodesic orbit?
\end{quest}

Recall that there is a one-parameter family of pairwise non-isometric 5-dimensional Einstein solvmanifolds that are not geodesic orbit
(see the discussion in Introduction).
Therefore, if we omit ``compact'' in Question  \ref{que1}, the answer is $5$.

Note also that  a compact Lie group $G$ with a left-invariant Einstein Rimannian metric and a non-discrete center should be a flat torus \cite{Bes},
hence naturally reductive and geodesic orbit. Therefore, we may restrict our attention on semisimple compact Lie groups.

According to Theorem 3 in \cite{Ozeki}, for any left-invariant Riemannian metric $\rho$ on a compact simply connected Lie group $G$,
there is a left-invariant Riemannian metric $\rho'$ isometric to $\rho$ and which full connected isometry group contains in the products
of left and right translations on $G$ (see also p.~23 in \cite{DZ}). This observation could be helpful in the study of Question \ref{que2}.

For instance, let us show that a left-invariant Einstein metrics $\rho$ on the group $SU(2)\times SU(2)$ is not geodesic orbit
if and only if its full connected isometry group is $SU(2)\times SU(2)$ (note that there is no example of  $\rho$ with this property).
Recall, that we have only partial results on the classification of left-invariant Einstein metrics on this group~\cite{NikRod}.
If~$\rho$ is one of these metrics,
we may assume without loss of generality, that its full connected isometry group is locally isomorphic to a group
$(SU(2)\times SU(2))\times K$,
where $K \subset SU(2)\times SU(2)$ means the subgroup of right translations. If $\dim(K)\geq 1$, then $K$ contains some $S^1$-subgroup. Then by Theorem 2
in \cite{NikRod}, $\rho$ is isometric either to the standard metric on $SU(2)\times SU(2)$
(the full isometry group is locally isomorphic to $SU(2)^4$ in this case), or
to the standard metric on $SU(2)\times SU(2)\times SU(2) / \diag(SU(2))$. In both cases $\rho$ is normal, hence, geodesic orbit.
On the other hand, if $\dim(K)=0$, then $SU(2)\times SU(2)$ is the full connected isometry group of $\rho$.
Let us show that $\rho$ could not be geodesic orbit in this case.
Indeed, if $\rho$ is geodesic orbit, then it is a bi-invariant metric on the group
$SU(2)\times SU(2)$ (see e.~g. Proposition 8 in \cite{AN}), hence its full isometry group is locally isomorphic to $SU(2)^4$.

Since $SU(2)$ admits only $SO(4)$-normal Einstein metrics and there are no semisimple compact Lie group of dimensions $5$ and $7$,
then one may study Question \ref{que2} only for dimensions $\geq 8$. The first interesting examples (in addition to the group $SU(2)\times SU(2)$) could be
the groups $SU(3)$ and $SU(2)\times SU(2)\times SU(2)$.

\bigskip

{\bf Acknowledgements.}
The author is indebted to Prof. Ioannis Chrysikos  and to Prof. Yusuke Sakane
for helpful discussions concerning this paper.

\vspace{10mm}

\bibliographystyle{amsunsrt}

\end{document}